\newtheorem{theorem}{Theorem}[section]
\newtheorem{lemma}[theorem]{Lemma}
\theoremstyle{definition}
\newtheorem{definition}[theorem]{Definition}
\theoremstyle{definition}
\newtheorem{proposition}[theorem]{Proposition}
\newtheorem{example}[theorem]{Example}
\theoremstyle{remark}
\newtheorem{remark}[theorem]{Remark}
\numberwithin{equation}{section}
\begin{document}

\title{On Wishart and non-central Wishart
distributions on symmetric cones}


\author{Eberhard Mayerhofer}
\address{Department of Mathematics and Statistics, University of Limerick, Castletroy, Ireland}
\email{eberhard.mayerhofer@ul.ie}

\subjclass[2010]{Primary 60B15; Secondary 05A10, 05A05,33C80}
\keywords{Wishart Distribution, Jack Polynomials, Generalized Binomial Coefficients, Jack Polynomials, Symmetric Cones}
%

\date{}


\begin{abstract}

Necessary conditions for the existence of non-central Wishart distributions are given. Our method relies on positivity properties of spherical polynomials on Euclidean Jordan Algebras and advances an approach by Peddada and Richards (1991), where only a special case (positive semidefinite matrices, rank one non-centrality parameter) is treated. Not only needs the shape parameters be in the Wallach set - as is the case for Riesz measures - but also the rank of the non-centrality parameter is constrained by the size of the shape parameter. This rank condition has been recently proved with different methods for the special case of symmetric, positive semidefinite matrices (Letac and Massam (2011) and Graczyk, Malecki and Mayerhofer (2016)). 
\end{abstract}

\maketitle




%
%
%
%
%
%
%


\renewcommand{\labelenumi}{\rm{(\roman{enumi})}}
\renewcommand{\theenumi}{\rm{(\roman{enumi})}}
\newcommand{\eps}{\varepsilon}
\newcommand{\tr}{\operatorname{tr}}
\newcommand{\rank}{\operatorname{rank}}
\newcommand{\diag}{\operatorname{diag}}

\section{Introduction}
For specific parameter instances, the Wishart distribution arises as the distribution of the covariance of samples from the multivariate normal distribution and was discovered as such by Wishart \cite{wishart1928generalised}. The Wishart distribution $\Gamma(\beta,\Sigma)$ on the cone of positive semidefinite $m\times m$ matrices 
$\mathcal S_m^+$, can be defined via its Laplace transform\footnote{The characteristic function
as computed for half-integers $\beta$ by \cite[p.444]{muirhead2009aspects}, and used in \cite{peddada1991proof} as defining equation is cumbersome, due to the use of the complex power function $z^\beta$, $\beta>0$, and is therefore avoided throughout this paper.}
\[
\Phi(u; \beta,\Sigma):=\int_{\mathcal S_d^+}e^{-\tr(u\xi)}p(d\xi)=\det(I_m+2u\Sigma)^{-\beta},\quad u \in \mathcal S_m^+,
\]
where $p\sim \Gamma(\beta,\Sigma)$. Here one assumes 
non-degenerate scale parameter $\Sigma\in\mathcal S_m^{++}$ (the positive definite matrices) and a non-negative shape parameter $\beta \geq 0$. From its definition, one can see that the Wishart distribution is a natural multivariate generalization of the Gamma distribution to the cone of positive semidefinite matrices. Unlike this
one-dimensional distribution, the Wishart distribution does not exist for all natural parameter choices. In fact, it is well known that $\Gamma(\beta,\Sigma)$ exists, if and only if 
\begin{equation}\label{eq: gindikin}
\beta\in \Lambda=\{0,\frac{1}{2},\dots,\frac{m-2}{2}\}\cup [\frac{m-1}{2},\infty).
\end{equation}
The set $\Lambda$ is often referred to as Gindikin set, in honor of Gindikin who studied Gamma distributions on homogeneous cones \cite{Gindikin}.

Graczyk, Malecki and Mayerhofer \cite{gmm} prove that in the more general case of the non-central Wishart distribution $\Gamma(\beta,\Sigma;\Omega)$ (with non-centrality parameter $\Omega\in\mathcal S_m^+$)
whose Laplace transform is given by
\[
\Phi(u;\beta,\Sigma,\Omega)=\det(I_m+2u\Sigma)^{-\beta} e^{-2\tr(\Omega u\Sigma( I_m+2 u\Sigma)^{-1})},\quad u\in S_m^+,
\]
in addition to \eqref{eq: gindikin}, the rank condition must hold:
\begin{equation}\label{eq: rank condition}
\text{If}\;\beta<\frac{m-1}{2},\text{ then }\; \rank(\Omega)\leq 2\beta.
\end{equation}
Their proof uses the modern theory of affine Markov semigroups with state space $\mathcal S_m^+$ \cite{CFMT}, and studies the action of these on elementary symmetric functions. Another, perhaps more elementary but rather technical proof of Letac and Massam \cite{letac2011existence} requires the precise decomposition of $\Gamma(\frac{m-1}{2},\Sigma;\Omega)$ into its singular and absolute continuous parts, and a detailed analysis of the support of non-central Wishart distributions.

This paper advances the very first approach to the existence issue of non-central Wishart distributions by Peddada and Richards \cite[Theorem 2]{peddada1991proof} who use properties of spherical polynomials with matrix argument to derive
the weaker condition \eqref{eq: gindikin}, albeit only for the $\rank(\Omega)=1$ case. Besides, they were not aware of the rank
condition \eqref{eq: rank condition}, which had first been conjectured by the author in a talk at a CIMPA workshop in Hammamet in 2011 (this rank condition is
termed ``Mayerhofer conjecture" in \cite{letac2011existence} and ``Non-central Gindikin set conjecture" in \cite{gmm}).


The refinement of Peddada and Richards' method produces the shortest and most elementary proof of the statements in \cite{gmm,letac2011existence}: 

\begin{theorem}\label{th: main}
Let $\beta\geq 0$, $\Omega\in\mathcal S_m^+$ and $\Sigma\in\mathcal S_m^{++}$. If $\Gamma (\beta,\Sigma;\Omega)$ exists, then conditions \eqref{eq: gindikin} and \eqref{eq: rank condition} must hold.
\end{theorem}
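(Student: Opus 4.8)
The plan is to test the hypothetical measure $p\sim\Gamma(\beta,\Sigma;\Omega)$ against nonnegative spherical polynomials and to extract the two conditions from its Laplace transform. Since $GL_m(\mathbb R)$ acts transitively on $\mathcal S_m^{++}$ by $\xi\mapsto g\xi g^\top$, I would first push $p$ forward along $\xi\mapsto\Sigma^{-1/2}\xi\Sigma^{-1/2}$ to reduce to the normalisation $\Sigma=\tfrac12 I_m$; this leaves $\beta$ unchanged and replaces $\Omega$ by a congruent matrix of the same rank. With $\Sigma=\tfrac12 I_m$ the identity $2u\Sigma(I_m+2u\Sigma)^{-1}=I_m-(I_m+u)^{-1}$ turns the Laplace transform into $\Phi(u)=e^{-\tr\Omega}\det(I_m+u)^{-\beta}\exp\big(\tr(\Omega(I_m+u)^{-1})\big)$; averaging over the orthogonal group — which preserves the property of being the Laplace transform of a positive measure, and leaves all spherical moments unchanged since spherical polynomials are $O(m)$-invariant — and using $\det(I_m+kuk^\top)=\det(I_m+u)$ gives
\[
\overline\Phi(u):=\int_{O(m)}\Phi(kuk^\top)\,dk=e^{-\tr\Omega}\det(I_m+u)^{-\beta}\,{}_0F_0\big(\Omega,(I_m+u)^{-1}\big),
\]
an $O(m)$-invariant Laplace transform whose spherical (zonal polynomial) expansion can be analysed directly; equivalently, under the diffeomorphism $w=u(I_m+u)^{-1}$ of $\mathcal S_m^+$ onto $\{0\preceq w\prec I_m\}$ it collapses to $\overline\Phi=\det(I_m-w)^{\beta}\,{}_0F_0(-\Omega,w)$. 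I would also record a marginalisation principle: taking $u$ supported on a principal block shows that if $\Gamma(\beta,\tfrac12 I_m;\Omega)$ exists then so does $\Gamma(\beta,\tfrac12 I_s;\Omega_{SS})$ for every principal submatrix $\Omega_{SS}$ of size $s$, which lets one transfer the problem to cases where $\Omega$ has full rank or corank one.

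Next I would turn positivity into sign conditions. For a polynomial $P$ nonnegative on $\mathcal S_m^+$ one has $P(-\partial_u)\Phi(u)=\int P(\xi)e^{-\tr(u\xi)}p(d\xi)\ge 0$ for $u\in\mathcal S_m^{++}$; taking $P$ among the spherical polynomials $C_\kappa$ (nonnegative on $\mathcal S_m^+$) and letting $u\to0$ gives $(-1)^{|\kappa|}a_\kappa\ge 0$, where $a_\kappa$ is the coefficient of $C_\kappa$ in $\overline\Phi=\sum_\kappa a_\kappa C_\kappa$, while keeping $u$ in the open cone and testing against products of nonnegative linear forms (or against $C_\kappa(\xi+e)$, which is where the generalised binomial coefficients enter) yields the remaining necessary positive‑definiteness inequalities. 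Expanding the two factors of $\overline\Phi$ by the generalised binomial theorem, $\det(I_m-w)^{\beta}=\sum_\mu\frac{(-\beta)_\mu}{|\mu|!}C_\mu(w)$ contributes the shifted Pochhammer symbols $(\beta)_\kappa=\prod_i(\beta-\tfrac{i-1}{2})_{\kappa_i}$, whereas ${}_0F_0(-\Omega,w)=\sum_\lambda\frac{(-1)^{|\lambda|}C_\lambda(\Omega)}{|\lambda|!\,C_\lambda(I_m)}C_\lambda(w)$ contributes only the partitions $\lambda$ with at most $\rank\Omega$ parts, since $C_\lambda(\Omega)=0$ as soon as $\ell(\lambda)>\rank\Omega$; the product is formed by means of the nonnegative linearisation and binomial coefficients of the $C_\kappa$, and the change of variable $w=u-u^2+\cdots$ contributes further, sign‑controlled corrections.

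The two conditions then emerge as follows. The spherical polynomial of the column partition $(1^{\ell})$ is the $\ell$-th elementary symmetric function of the eigenvalues, nonzero on $\mathcal S_m^+$ exactly for $\ell\le m$, and $(\beta)_{(1^{\ell})}=\beta(\beta-\tfrac12)\cdots(\beta-\tfrac{\ell-1}{2})$ changes sign precisely when $\beta$ slips strictly between two consecutive members of $\Lambda$; choosing $\ell=j+2\le m$ when $\beta\in(\tfrac j2,\tfrac{j+1}{2})$ with $j\le m-2$, this produces a coefficient violating $(-1)^{|\kappa|}a_\kappa\ge 0$ — and, as in Peddada and Richards for $\rank\Omega=1$, one checks that the nonnegative $\Omega$-contributions cannot repair the sign; this forces \eqref{eq: gindikin}. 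For \eqref{eq: rank condition}, granting \eqref{eq: gindikin} one has $\beta=\tfrac j2$ with $j\le m-2$ whenever $\beta<\tfrac{m-1}{2}$; if $\rank\Omega>j=2\beta$ then, after the marginal reduction to the corank‑one case, a spherical polynomial $C_\kappa$ whose length exceeds $j$ but stays within $\rank\Omega$ genuinely enters the expansion of $\overline\Phi$ exactly where a companion coefficient $(\beta)_\mu$ with $\ell(\mu)\ge j+1$ vanishes, and tracking this contribution together with the Jacobian corrections from $u\mapsto u(I_m+u)^{-1}$ violates the positivity inequalities.

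The step I expect to be the genuine obstacle is the last one: assembling, for the critical partition, the several competing sign contributions — the Pochhammer symbols $(-\beta)_\mu$, the alternating signs coming from ${}_0F_0(-\Omega,\cdot)$, the (nonnegative but nontrivial) linearisation and binomial coefficients, and the corrections induced by the change of variable — and proving that they cannot conspire to restore nonnegativity. Pinning down precisely which spherical polynomial (and which point $u$ of the cone, if $u=0$ does not suffice) witnesses the failure in the rank‑condition case, and verifying that a single such witness is enough rather than the full positive‑definiteness hierarchy, is the heart of the argument.
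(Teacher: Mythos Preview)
Your overall strategy---testing the putative measure against nonnegative zonal polynomials and reading off sign constraints on the moments $\mathbb E[C_\kappa(S)]$---is exactly the paper's, and your choice of $\kappa=(1^\ell)$ with the column Pochhammer $(\beta)_{(1^\ell)}=\prod_{i=0}^{\ell-1}(\beta-i/2)$ is the right probe. But you have correctly identified your own gap, and it is real: at a fixed $\Omega$ the moment $\mathbb E[C_\kappa(S)]$ is a sum of several terms of mixed sign, and your machinery (product linearisation of the two series in $w$, corank--one marginalisation, Jacobian corrections from $u\mapsto u(I_m+u)^{-1}$) does not give you a mechanism to rule out cancellation.

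The idea you are missing, and which dissolves the obstacle entirely, is to first show that existence of $\Gamma(\beta,\Sigma;\Omega)$ forces existence of $\Gamma(\beta,I_m;t\Omega)$ for \emph{every} $t\ge 0$ (this is Lemma~\ref{lem: red ext}). With this one-parameter family in hand, the moment formula
\[
\mathbb E\,C_\kappa(S(t))=C_\kappa(I_m)\sum_{\sigma\subseteq\kappa}\binom{\kappa}{\sigma}\frac{(\beta)_\kappa}{(\beta)_\sigma}\,\frac{C_\sigma(t\Omega)}{C_\sigma(I_m)}
\]
becomes a polynomial in $t$, since $C_\sigma(t\Omega)=t^{|\sigma|}C_\sigma(\Omega)$. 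Now take $\kappa=1^{k+1}$ with $k=\rank\Omega$: the only $\sigma\subseteq\kappa$ of maximal degree with $C_\sigma(\Omega)\neq 0$ is $\sigma=1^k$, so the leading coefficient in $t$ is the \emph{single} term
\[
\Bigl(\beta-\tfrac{k}{2}\Bigr)\binom{1^{k+1}}{1^{k}}\frac{C_{1^k}(\Omega)}{C_{1^k}(I_m)},
\]
and the strict positivity of the contiguous binomial coefficient (Theorem~\ref{thm1: conj1}) together with $C_{1^k}(\Omega)>0$ forces $\beta\ge k/2$: there is no conspiracy to untangle. Letting $t\to 0$ isolates the constant term $(\beta)_\kappa$, and running $\kappa=1^{\ell+1}$ over $\ell$ then pins $\beta$ to the Gindikin set. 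Your product expansion of $\det(I_m-w)^\beta\,{}_0F_0(-\Omega,w)$ is consistent with this formula but hides it behind linearisation coefficients; the $O(m)$--averaging, the substitution $u\mapsto w$, and the corank--one reduction are all unnecessary once you have the scaling in $t$.
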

It turns out that Peddada and Richards' method is perfectly suited for an application in the general setting of symmetric cones. The appropriate
generalization of Theorem \ref{th: main} is developed in Section \ref{sec: symmetric cones} (Theorem \ref{th: main2}). This result
offers also a solution to \cite[Conjecture 1]{peddada1991proof} in the Hermitian case (but goes beyond it, including a rank condition), while it of course comprises all other irreducible symmetric cones: The Lorentz cones 
and the positive self-adjoint cones of arbitrary dimensions (over the reals, complex numbers, quaternions) as well as the exceptional $27$ dimensional cone of $3\times 3$ matrices
over the octonions.
\subsection*{Program}
In Section \ref{sec: prep} some facts on {\it generalized binomial coefficients} are elaborated, in particular their non-negativity, as well as
strict positivity in certain cases (Theorem \ref{thm1: conj1}). These coefficients arise in expansions of zonal polynomials which in turn are normalized Jack polynomials, with a parameter depending on the geometry of the particular symmetric cone (Theorem \ref{th: jack and zonal}). The above stated Theorem \ref{th: main} is proved in Section \ref{proof th1}.This result is then extended to the irreducible symmetric cone setting in Section \ref{sec: symmetric cones} (Theorem \ref{th: main2}). For the convenience of the reader, who might not be familiar with the algebraic structure of Euclidean Jordan Algebras, sections \ref{sec: matrix case} and \ref{sec: symmetric cones} are kept independent and self-contained. 

 In Appendix \ref{appendix: C} some facts of symmetric cones, and algebraic properties for Euclidean Jordan algebras are summarized. Essential formulas for the so-called contiguous binomial coefficients are given in Appendix \ref{appendix: A}. Finally, in Appendix \ref{appendix: B} we elaborate the maximal domain of the moment generating function of Wishart distributions. Furthermore, the action of linear automorphisms as well as of the natural exponential family on non-central Wishart distributions, which is required for the proof of Theorem \ref{th: main2}, is studied.

\section{Positivity of Generalized Binomial Coefficients}\label{sec: prep}

Let $m\in\mathbb N$, and $\sigma=(\sigma_1,\dots,\sigma_m)$ be a multi-index. The degree of a multi-index $\sigma\in\mathbb N_0^m$ is $\vert \sigma\vert:=\sigma_1+\dots+\sigma_m$.  All multi-indices that we consider here are in non-increasing order. The length $l(\sigma)$ is the last index $i$ for which $\sigma_i\neq 0$. The set of length $m$ multi-indices is denoted by $\mathcal E_m:=\{\sigma \in\mathbb N_0^m\mid \sigma_1\geq \sigma_2\geq\dots\geq \sigma_m\}$. The $i^{\text{th}}$ contiguous multi-index is defined by $\sigma^{i}:=(\sigma_1,\dots,\sigma_i+1,\dots,\sigma_m)$. Denote by $1^m$ the $m$-vector $(1,\dots,1)$. 


For a given $\alpha>0$, the Jack polynomials $J_\kappa(\,;\alpha)$ indexed by $\kappa\in \cup_{m\geq 0}\mathcal E_m$ are the unique normalized symmetric polynomials in several variables with coefficients in the field $\mathbb Q(\alpha)$, which satisfy both a triangular, and an orthogonality property 
with respect to the a certain scalar product (for details, see \cite[Chapter VI, (4.5)]{macdonald1998symmetric}). This paper uses
another characterization put forward by R. Stanley (\cite[Theorem 3.1]{stanley1989some}).
\begin{theorem}
The polynomial $J_\kappa(\,;\alpha)$ is an eigenfunction of the Laplace-Beltrami-type operator 
\begin{equation}\label{eq: eigen alpha}
D(\alpha):=\frac{\alpha}{2}\sum_{i=1}^m t_i^2\frac{\partial^2}{\partial t_i^2}+\sum_{\substack{1\leq i,j\leq m\\ i\neq j}}\frac{t_i^2}{t_i-t_j}\frac{\partial}{\partial t_i},
\end{equation}
associated with the eigenvalue
\[
e_\kappa(\alpha):=\alpha \sum_{i=1}^m \kappa_i\frac{(\kappa_i-1)}{2}-\sum_{i=1}^m(i-1)\kappa_i+(m-1)\vert \kappa\vert.
\]
There are no further eigenfunctions linearly independent from the $J_\kappa$'s.
\end{theorem}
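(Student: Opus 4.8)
The plan is to recover Stanley's statement from the Macdonald-type characterization recalled just above, by combining a triangularity property of $D(\alpha)$ with a self-adjointness property in the standard way. I work throughout in the graded algebra of symmetric polynomials in $t_1,\dots,t_m$. First, a formal remark: since $\frac{t_i^2}{t_i-t_j}+\frac{t_j^2}{t_j-t_i}=t_i+t_j$, the apparent poles in \eqref{eq: eigen alpha} cancel in pairs whenever $D(\alpha)$ acts on a symmetric polynomial, so $D(\alpha)$ is a well-defined, degree-preserving linear operator on that algebra. I also fix the scalar product $\langle\cdot,\cdot\rangle_\alpha$ under which the power sums are orthogonal with $\langle p_\lambda,p_\mu\rangle_\alpha=\delta_{\lambda\mu}\,z_\lambda\,\alpha^{l(\lambda)}$ ($z_\lambda$ the usual combinatorial factor); for $\alpha>0$ it is positive definite, and by definition $J_\kappa$ is the unique symmetric polynomial of the form $J_\kappa=m_\kappa+\sum_{\mu\prec\kappa}(\ast)\,m_\mu$ — with $m_\lambda$ the monomial symmetric polynomial and $\prec$ the strict dominance order — that is $\langle\cdot,\cdot\rangle_\alpha$-orthogonal to every $J_\mu$, $\mu\ne\kappa$. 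The Jack polynomials of any fixed degree form a basis of the corresponding homogeneous component.

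Two lemmas then carry the proof. \emph{Triangularity.} $D(\alpha)\,m_\lambda=e_\lambda(\alpha)\,m_\lambda+\sum_{\mu\prec\lambda}c_{\lambda\mu}\,m_\mu$. This is a direct, if slightly delicate, computation in which the poles are kept in the paired form above: the second-order part of $D(\alpha)$ contributes $\frac{\alpha}{2}\sum_i\lambda_i(\lambda_i-1)$ to the diagonal coefficient, the first-order part contributes $-\sum_i(i-1)\lambda_i+(m-1)|\lambda|$, and the remaining monomials are indexed by partitions strictly below $\lambda$ in dominance (see \cite{macdonald1998symmetric,stanley1989some}). \emph{Self-adjointness.} $\langle D(\alpha)f,g\rangle_\alpha=\langle f,D(\alpha)g\rangle_\alpha$ for all symmetric $f,g$. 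This is the technical heart of the theorem. A hands-on route is to compute $D(\alpha)$ on the power-sum basis — where it is given by an explicit second-order formula — and to check that the resulting matrix, weighted by the diagonal Gram matrix $(z_\lambda\alpha^{l(\lambda)})_\lambda$, is symmetric; conceptually, after a change of variables and conjugation by a power of the Vandermonde product, $D(\alpha)$ is (up to a scalar and an additive constant) the trigonometric Calogero--Sutherland operator, whose self-adjointness is classical.

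Granting these, the proof concludes in a few lines. By triangularity $D(\alpha)$ preserves $V_\lambda:=\operatorname{span}\{m_\mu:\mu\preceq\lambda\}$, and since the change of basis $\{m_\mu\}\leftrightarrow\{J_\mu\}$ is unitriangular for dominance we also have $V_\lambda=\operatorname{span}\{J_\mu:\mu\preceq\lambda\}$; hence $D(\alpha)J_\kappa=\sum_{\mu\preceq\kappa}a_\mu J_\mu$. For $\mu\prec\kappa$, orthogonality gives $a_\mu\langle J_\mu,J_\mu\rangle_\alpha=\langle D(\alpha)J_\kappa,J_\mu\rangle_\alpha$, and self-adjointness turns the right-hand side into $\langle J_\kappa,D(\alpha)J_\mu\rangle_\alpha$, which vanishes because $D(\alpha)J_\mu\in\operatorname{span}\{J_\nu:\nu\preceq\mu\}$ and every such $\nu$ differs from $\kappa$; positive definiteness gives $\langle J_\mu,J_\mu\rangle_\alpha\ne0$, so $a_\mu=0$. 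Therefore $D(\alpha)J_\kappa=a_\kappa J_\kappa$, and comparing the coefficient of $m_\kappa$ on the two sides (it equals $e_\kappa(\alpha)$ on the left, by triangularity) yields $a_\kappa=e_\kappa(\alpha)$. Finally, the Jack polynomials form a homogeneous basis of the whole algebra of symmetric polynomials, so any eigenfunction of $D(\alpha)$ is a linear combination of them — which is the last assertion. I expect the sole genuine obstacle to be the self-adjointness lemma; the triangularity computation is routine, and the monotonicity $e_\mu(\alpha)<e_\lambda(\alpha)$ whenever $\mu\prec\lambda$ and $\alpha>0$ — valid because both $\sum_i\binom{\lambda_i}{2}$ and $\sum_i(i-1)\lambda_i$ strictly increase along dominance (the latter since $\sum_i(i-1)\lambda_i=\sum_j\binom{\lambda'_j}{2}$ and conjugation reverses dominance) — is only a convenient cross-check, not needed for the argument above.
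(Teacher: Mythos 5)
Your argument is correct and is essentially Stanley's own proof of this theorem, which the paper imports without proof (citing \cite[Theorem 3.1]{stanley1989some}) and merely remarks on how to pass to the stated form of the eigenvalue via the leading-monomial computation: triangularity of $D(\alpha)$ in the monomial basis for dominance order, plus self-adjointness for the $\alpha$-deformed Hall inner product, plus the defining triangularity/orthogonality of the $J_\kappa$, is exactly the standard route. The one ingredient you assert rather than verify --- self-adjointness of $D(\alpha)$ --- is indeed the technical core, but it is classical and either of your proposed routes (the power-sum basis computation, or conjugation to the Calogero--Sutherland operator) goes through, so apart from that deferred lemma there is no gap (and note your monicity convention $J_\kappa=m_\kappa+\cdots$ is really the $P$-normalization, which is harmless here since the eigenfunction property is normalization-independent).
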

\begin{remark}
\cite{stanley1989some} gives a formula in terms of conjugate indices. The quoted form in \eqref{eq: eigen alpha} can be obtained by using the computations in the proof of \cite[Theorem 7.2.2]{muirhead2009aspects} together with the proof of \cite[Theorem 3.1]{stanley1989some}, where one needs only to prove that $D_\alpha t_1^{\kappa_1}\dots t_m^{\kappa_m}=e_\kappa(\alpha) t_1^{\kappa_1}\dots t_m^{\kappa_m} +$ lower order terms.
\end{remark}

The {\bf generalized binomial coefficients} $\pmb{{\kappa \choose \sigma}_\alpha}$ are implicitly defined\footnote{The generalized binomial coefficients are well-defined, since the Jack polynomials $J_\lambda(;\alpha)$  with $\lambda_1+\lambda_2+\dots=n$ form a basis of homogeneous symmetric functions of
degree $n$ with coefficients  in $\mathbb Q(\alpha)$ (\cite{stanley1989some}, p.83). Note that in \eqref{def: bin} the left side is a symmetric polynomial of the same degree as $J_\kappa(\;;\alpha)$ and the coefficients $J_\kappa(1^m;\alpha)$ are strictly positive, see \cite[Theorem 5.4]{stanley1989some}.} by the identity
\begin{equation}\label{def: bin}
\frac{J_\kappa(t+1^m;\alpha)}{J_\kappa(1^m;\alpha)}=\sum_{s=0}^k\sum_{\vert\sigma\vert=s}{\kappa \choose \sigma}_\alpha \frac{J_\sigma(t;\alpha)}{J_\sigma(1^{m};\alpha)}.
\end{equation}

The coefficients \eqref{def: bin} are rational numbers by construction. Peddada and Richards remark \cite{peddada1991proof},
\begin{quote}
``In the real case no explicit formula is available for the generalized binomial coefficients, and it does not even appear to be known whether they are nonnegative always."
\end{quote}

We will need more and less: It turns out that we need strict positivity, yet only for a special class of coefficients, the contiguous ones.
\begin{theorem}\label{thm1: conj1}
${\kappa \choose \sigma}_\alpha$ is a nonnegative rational number for any $\kappa,\sigma\in\mathbb N_0^m$. Furthermore, the contiguous coefficients ${\sigma^{i}\choose \sigma}_\alpha$ are strictly positive.
\end{theorem}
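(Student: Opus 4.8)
The plan is to read every generalized binomial coefficient off the action of the degree‑lowering operator $\mathcal E_-:=\sum_{i=1}^{m}\partial/\partial t_i$ on Jack polynomials, and then to close the argument with a \emph{dual Pieri rule} for that operator. Since $J_\kappa(\,;\alpha)$ is homogeneous of degree $|\kappa|$, replacing $t$ by $t/c$ in \eqref{def: bin} and using homogeneity gives, for every scalar $c$,
\[
J_\kappa(t+c\,1^m;\alpha)=J_\kappa(1^m;\alpha)\sum_{\sigma}{\kappa\choose\sigma}_\alpha\,\frac{c^{\,|\kappa|-|\sigma|}}{J_\sigma(1^m;\alpha)}\,J_\sigma(t;\alpha).
\]
Applying $\frac{1}{r!}\frac{d^{r}}{dc^{r}}\big|_{c=0}$ and using $\frac{d}{dc}F(t+c\,1^m)=(\mathcal E_-F)(t+c\,1^m)$, one finds that, with $r:=|\kappa|-|\sigma|$,
\[
{\kappa\choose\sigma}_\alpha=\frac{1}{r!}\,\frac{J_\sigma(1^m;\alpha)}{J_\kappa(1^m;\alpha)}\;\bigl[\,\text{coefficient of }J_\sigma(\,;\alpha)\text{ in }\mathcal E_-^{\,r}J_\kappa(\,;\alpha)\,\bigr].
\]
As $J_\lambda(1^m;\alpha)>0$ by \cite[Theorem 5.4]{stanley1989some}, the sign of ${\kappa\choose\sigma}_\alpha$ coincides with the sign of that coefficient, and the contiguous coefficients ${\sigma^{i}\choose\sigma}_\alpha$ are precisely the case $r=1$.

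The engine is a dual Pieri rule: for every $\kappa$ with $\ell(\kappa)\le m$,
\[
\mathcal E_-J_\kappa(\,;\alpha)=\sum_{\sigma:\ \kappa=\sigma^{i}\text{ for some }i}c^{\kappa}_{\sigma}(\alpha)\,J_\sigma(\,;\alpha),
\]
with $c^{\kappa}_{\sigma}(\alpha)>0$ for every $\alpha>0$ and no other $J_\sigma$ occurring. Granting this and iterating, $\mathcal E_-^{\,r}J_\kappa(\,;\alpha)$ is a strictly positive combination of the $J_\sigma(\,;\alpha)$ over exactly the $\sigma\subseteq\kappa$ with $|\sigma|=|\kappa|-r$ — each coefficient being a sum, over chains $\sigma=\tau_0\subset\tau_1\subset\cdots\subset\tau_r=\kappa$ of single‑box steps, of products $\prod_{j=1}^{r}c^{\tau_j}_{\tau_{j-1}}(\alpha)$ — and vanishes against every other $J_\sigma(\,;\alpha)$. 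Substituting into the display above yields ${\kappa\choose\sigma}_\alpha\ge 0$ for all $\kappa,\sigma$, with ${\kappa\choose\sigma}_\alpha>0$ exactly when $\sigma\subseteq\kappa$; in particular ${\sigma^{i}\choose\sigma}_\alpha>0$.

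The hard part is the dual Pieri positivity for this specific $\mathcal E_-$. It is not immediate from the abstract theory: on $m$ variables $\mathcal E_-$ is not proportional to the power‑sum derivation but equals $m\,\partial/\partial p_1+\sum_{k=2}^{m}k\,p_{k-1}\,\partial/\partial p_k$, so positivity of the Jack expansion of $\mathcal E_-J_\kappa(\,;\alpha)$ emerges only after cancellation among these summands (already $\mathcal E_-J_{(1,1)}(\,;\alpha)=2(m-1)J_{(1)}(\,;\alpha)$, with a negative contribution from $p_1\,\partial/\partial p_2$ absorbed). I would establish it by duality: the ordinary Pieri rule $p_1\,J_\mu(\,;\alpha)=\sum_\lambda\psi'_{\lambda/\mu}(\alpha)\,J_\lambda(\,;\alpha)$ has connection coefficients $\psi'_{\lambda/\mu}(\alpha)$ equal to explicit products of factors strictly positive for $\alpha>0$ (\cite{macdonald1998symmetric}), and combining this with the positive‑definiteness of the Jack scalar product for $\alpha>0$ and an identification of the corresponding adjoint operator with $\mathcal E_-$ on $m$ variables produces both $c^{\kappa}_{\sigma}(\alpha)>0$ and the stated support. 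Alternatively, the strict positivity of the contiguous coefficients can be had without $\mathcal E_-$ from the explicit product formula collected in Appendix \ref{appendix: A}, which displays ${\sigma^{i}\choose\sigma}_\alpha$ directly as a ratio of products of quantities strictly positive for $\alpha>0$. Either way, the combinatorics of single‑box modifications carries the load, while the reduction of the first paragraph and the elementary bound $J_\lambda(1^m;\alpha)>0$ do the rest.
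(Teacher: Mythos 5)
Your first paragraph is correct and is in fact a clean derivation of the $m$-variable analogue of the recursion the paper takes from Kaneko: applying $\mathcal E_-$ twice to $\tfrac{1}{r!}\mathcal E_-^{\,r}J_\kappa=J_\kappa(1^m)\sum_{|\sigma|=|\kappa|-r}{\kappa\choose\sigma}_\alpha J_\sigma/J_\sigma(1^m)$ and comparing coefficients gives $(|\kappa|-|\tau|){\kappa\choose\tau}_\alpha=\sum_{\sigma}{\kappa\choose\sigma}_\alpha{\sigma\choose\tau}_\alpha$, which is \eqref{eq: 7.13} up to one crucial difference: your sum runs over \emph{all} partitions $\sigma$ with $|\sigma|=|\tau|+1$, whereas in \eqref{eq: 7.13} (Kaneko's Lemma 4) it runs only over the contiguous $\tau^i$. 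This is exactly where your argument has a gap. Your ``dual Pieri rule'' is, by the $r=1$ case of your own identity, \emph{equivalent} to the conjunction of (a) ${\sigma^i\choose\sigma}_\alpha>0$ and (b) the extra-vanishing statement ${\kappa\choose\sigma}_\alpha=0$ whenever $|\kappa|-|\sigma|=1$ but $\sigma\not\subseteq\kappa$. Your fallback via Lemma \ref{lem: almost clear pos} delivers (a) only; without (b), the iteration of $\mathcal E_-$ can a priori route chains through intermediate partitions $\sigma\not\subseteq\kappa$ whose one-step coefficients are not covered by the contiguous product formula and whose signs are unknown, so you cannot conclude that $\mathcal E_-^{\,r}J_\kappa$ is a nonnegative combination. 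Statement (b) is true but is a genuine theorem (the extra vanishing of shifted Jack polynomials, Knop--Sahi/Okounkov--Olshanski); the dominance-triangularity of $J_\kappa$ in the monomial basis does not yield it.

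Your proposed route to (b) via adjointness does not work as stated. The adjoint of multiplication by $p_1$ with respect to the Jack scalar product is $\alpha\,\partial/\partial p_1$, and — as you yourself observe — $\mathcal E_-=m\,\partial/\partial p_1+\sum_{k\ge2}k\,p_{k-1}\,\partial/\partial p_k$ on $m$ variables is a different operator; no inner product on $\Lambda_m$ for which the $J_\lambda$ are orthogonal makes $\mathcal E_-$ adjoint to $p_1$, so the ``identification'' you invoke is precisely the unproven (and, literally read, false) step. The repair is either to cite \eqref{eq: 7.13} directly — the sum there is already restricted to the contiguous $\sigma^i$, which is what the paper does before inducting on $|\kappa|-|\sigma|$ with the base cases \eqref{dirac} and \eqref{eq: contiguous} — or to supply an independent proof of the vanishing (b). With that citation your argument collapses to the paper's proof; the $\mathcal E_-$ formalism is an attractive repackaging but does not by itself supply the support restriction that carries the induction.
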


\begin{proof}
By \cite[Lemma 4]{kaneko1993selberg} (and for zonal polynomials, where $\alpha=2$, \cite[Exercise 7.13]{muirhead2009aspects}),
\begin{equation}\label{eq: 7.13}
\sum_{i=1}^m{\sigma^{i}\choose \sigma}_\alpha{\kappa \choose \sigma^{i}}_\alpha=(\vert \kappa\vert -\vert \sigma\vert) {\kappa \choose \sigma}_\alpha.
\end{equation}If $\vert\kappa\vert=\vert \sigma\vert$, there is nothing to prove, due to the fact that (cf.~\cite[p.~268, (5)]{muirhead2009aspects})
\begin{equation}\label{dirac}
{\kappa \choose \sigma}_\alpha=\delta_{\kappa\sigma}\in \{0,1\}.
\end{equation}
The contiguous coefficients
${\sigma^{i}\choose \sigma}_\alpha$ can be explicitly computed for each $i=1,\dots,m$. It follows immediately from the formula in
Lemma \ref{lem: almost clear pos} below that
\begin{equation}\label{eq: contiguous}
{\sigma^{i}\choose \sigma}_\alpha\geq 0.
\end{equation}

Induction over $d=\vert \kappa\vert-\vert \sigma\vert$ is used. For any $\kappa,\sigma$ and $d=0,1$, the statement holds, see \eqref{eq: contiguous}, \eqref{dirac}). These cases
form the induction basis.

Supposing the induction hypothesis
\begin{equation}
{\mu \choose \nu}_\alpha\geq 0,\;\text{for all }\mu,\nu \;\text{satisfying}\;\vert\mu\vert-\vert \sigma\vert\leq d-1
\end{equation}
it follows that
\begin{equation}\label{eq: reduced bin1}
{\kappa \choose \sigma^{i}}_\alpha\geq 0
\end{equation}
for each $i=1,\dots,n$, because $\vert \kappa\vert-\vert\sigma^i\vert=d-1$.

Hence by \eqref{eq: 7.13}, \eqref{eq: contiguous} and \eqref{eq: reduced bin1},
\[
{\kappa \choose \sigma}_\alpha=\frac{1}{k-s}\left(\sum_{i=1}^m {\sigma^{i}\choose \sigma}_\alpha {\kappa \choose \sigma^{i}}_\alpha\right)\geq 0,
\]
and thus the first part of the statement is proved. The strict positivity of the contiguous coefficients follows directly from their explicit expressions provided by
Lemma \ref{lem: almost clear pos}.
\end{proof}

\section{The Wishart Distribution on Positive Semidefinite Matrices}\label{sec: matrix case}
Denote by $\mathcal S_m$ the symmetric $m\times m$ matrices.

Recall the definition of zonal polynomials of matrix argument. For a strictly positive constant $C_\kappa(I_m)$,
\begin{equation}\label{eq: def zonal}
C_\kappa(x)=C_\kappa(I_m)\Phi_\kappa(x),\quad \Phi_\kappa(x)=\int_{u\in \mathbb O(m)} \Delta_\kappa(u x u^*)du, \quad x\in\mathcal S_m,
\end{equation}
where $du$ is the Haar measure on the orthogonal group $\mathbb O(m)$, and for $\kappa_1\in\mathcal E_m$
\[
\Delta_\kappa(x)=\Delta_1(x)^{\kappa_1-\kappa_2}\Delta_2(x)^{\kappa_2-\kappa_3}\dots \Delta_m(x)^{\kappa_m},
\]
where $\Delta_i$ is the $i^{\text{th}}$ subminor of the matrix $x\in\mathcal S_m$.
\begin{lemma}\label{lem: zonal}
$C_{\kappa}(\Omega)\geq0$. Furthermore, if $\rank(x)=k$ and $l(\kappa)=k$, then $C_\kappa(x)>0$.
\end{lemma}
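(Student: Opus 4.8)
The plan is to work directly from the integral representation \eqref{eq: def zonal}. Since $C_\kappa(I_m)>0$ by hypothesis, it suffices to analyze $\Phi_\kappa(x)=\int_{\mathbb O(m)}\Delta_\kappa(uxu^*)\,du$. For the nonnegativity assertion $C_\kappa(\Omega)\ge 0$, first note that when $\Omega\in\mathcal S_m^+$ the matrices $u\Omega u^*$ are positive semidefinite for every $u\in\mathbb O(m)$, hence all leading principal minors $\Delta_i(u\Omega u^*)$ are $\ge 0$, and therefore $\Delta_\kappa(u\Omega u^*)$ is a product of nonnegative quantities raised to nonnegative integer powers $\kappa_i-\kappa_{i+1}\ge 0$ (with $\kappa_{m+1}:=0$). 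Thus the integrand is nonnegative and $\Phi_\kappa(\Omega)\ge 0$, giving $C_\kappa(\Omega)\ge 0$.

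For the strict positivity, suppose $\rank(x)=k$ and $l(\kappa)=k$. The first step is to reduce, by continuity in $u$, to exhibiting a single orthogonal matrix $u_0$ with $\Delta_\kappa(u_0 x u_0^*)>0$: if the integrand is strictly positive at one point of the compact group $\mathbb O(m)$ and nonnegative everywhere, the integral is strictly positive. To produce such a $u_0$, diagonalize $x=v\,\diag(\lambda_1,\dots,\lambda_k,0,\dots,0)\,v^*$ with $\lambda_1,\dots,\lambda_k>0$ and $v\in\mathbb O(m)$; taking $u_0=v^*$ we get a diagonal matrix whose top-left $j\times j$ block has determinant $\lambda_1\cdots\lambda_j>0$ for $j\le k$ and $=0$ for $j>k$. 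Hence $\Delta_j(u_0xu_0^*)>0$ precisely for $j\le k$, and since $l(\kappa)=k$ the exponents appearing in $\Delta_\kappa$ are $\kappa_i-\kappa_{i+1}$ for $i\le k-1$ (nonnegative) and $\kappa_k>0$, while $\kappa_j-\kappa_{j+1}=0$ for $j>k$; so every factor $\Delta_j(u_0xu_0^*)^{\kappa_j-\kappa_{j+1}}$ with a positive exponent has a strictly positive base. Therefore $\Delta_\kappa(u_0xu_0^*)=\prod_{j=1}^{k}(\lambda_1\cdots\lambda_j)^{\kappa_j-\kappa_{j+1}}>0$, and the previous reduction yields $\Phi_\kappa(x)>0$, hence $C_\kappa(x)>0$.

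The one point deserving care — and the main (minor) obstacle — is the interchange between "integrand strictly positive at a point" and "integral strictly positive": this requires the integrand $u\mapsto\Delta_\kappa(uxu^*)$ to be continuous on $\mathbb O(m)$, which is clear since leading principal minors are polynomial in the entries, and it requires the Haar measure to assign positive mass to every nonempty open set, which holds for the Haar measure on a compact group. With these standard facts the argument is complete; no delicate estimate on zonal polynomials themselves is needed, only the geometry of the defining integral.
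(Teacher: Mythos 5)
Your proof is correct and follows essentially the same route as the paper: nonnegativity of the integrand in \eqref{eq: def zonal} gives $C_\kappa(\Omega)\ge 0$, and strict positivity of the integrand on a nonempty open subset of $\mathbb O(m)$, combined with the fact that Haar measure charges nonempty open sets, gives $C_\kappa(x)>0$. Your explicit choice of $u_0$ diagonalizing $x$ with the nonzero eigenvalues in the leading positions is in fact a welcome refinement: the paper asserts positivity of $u\mapsto\Delta_\kappa(uxu^*)$ near $u=I_m$, which can fail (e.g.\ $x=\diag(0,1)$, $\kappa=(1,0)$), whereas your argument exhibits a point where it genuinely holds.
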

\begin{proof}
$C_{\kappa}(\Omega)\geq0$, because of the nonnegative integrand in the very definition of the zonal polynomials, eq.~\eqref{eq: def zonal}. Concerning strict positivity, note that the non-negative map $u\mapsto \Delta_\kappa(u x u^\top)$ is strictly positive in a non-empty open neighborhood $U_0$ of the identity $u=I_m$. Since $\mathbb O(m)$ is a compact set, it can be covered by finitely many translates of $U_0$. The translation invariance of the Haar measure implies
that strictly positive mass is assigned to $U_0$, whence
\[
 \Phi_\kappa(x)\geq \int_{u\in \mathbb O(m)\cap U_0} \Delta_\kappa(u x u^*)du>0.
\]
\end{proof}

\begin{lemma}\label{lem: red ext}
Let $\beta\geq 0$, $\Omega\in\mathcal S_m^+$ and $\Sigma\in\mathcal S_m^{++}$. If the Wishart distribution $\Gamma(\beta,\Sigma;\Omega)$ exists, so does $\Gamma(\beta,\Sigma;t\Omega)$ exist for any $t\geq 0$. 
\end{lemma}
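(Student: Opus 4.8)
The plan is to realise $\Gamma(\beta,\Sigma;t\Omega)$ as the image of $\Gamma(\beta,\Sigma;\Omega)$ under a composition of two operations that preserve the non-central Wishart class: an exponential change of measure (the natural exponential family) by a multiple of $\Sigma^{-1}$, followed by the linear automorphism $\xi\mapsto c\xi$ of $\mathcal S_m^+$. The key point is that tilting by $s\Sigma^{-1}$ multiplies \emph{both} $\Sigma$ and $\Omega$ by the common factor $(1+2s)^{-1}$ while leaving $\beta$ fixed, whereas the homothety $\xi\mapsto c\xi$ rescales only $\Sigma$; composing the two keeps $\beta$ and $\Sigma$ untouched and dilates $\Omega$ by an arbitrary positive constant.

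Let $p\sim\Gamma(\beta,\Sigma;\Omega)$, which exists by hypothesis. For $\theta$ in the interior of the set where the moment generating function $\Phi(\theta;\beta,\Sigma,\Omega)=\int_{\mathcal S_m^+}e^{-\tr(\theta\xi)}p(d\xi)$ is finite, the tilted probability measure $p_\theta(d\xi):=\Phi(\theta;\beta,\Sigma,\Omega)^{-1}e^{-\tr(\theta\xi)}\,p(d\xi)$ on $\mathcal S_m^+$ has Laplace transform $u\mapsto\Phi(u+\theta;\beta,\Sigma,\Omega)\,/\,\Phi(\theta;\beta,\Sigma,\Omega)$. By the determination of the maximal domain of the moment generating function in Appendix~\ref{appendix: B}, that domain is $\{\theta\in\mathcal S_m:\Sigma^{-1}+2\theta\succ 0\}$, on which $\Phi$ is still given by the closed formula of the Introduction; in particular it contains $\theta_s:=s\Sigma^{-1}$ for every $s>-\frac12$. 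Writing $c:=1+2s>0$ and $\Sigma_\theta:=(\Sigma^{-1}+2\theta)^{-1}$, and using the elementary identities $u\Sigma(I_m+2u\Sigma)^{-1}=\frac12\bigl(I_m-(I_m+2u\Sigma)^{-1}\bigr)$ and $I_m+2(u+\theta)\Sigma=(I_m+2u\Sigma_\theta)(I_m+2\theta\Sigma)$, one simplifies the tilted Laplace transform; specialising to $\theta=\theta_s$, so that $\Sigma_{\theta_s}=c^{-1}\Sigma$ and $(I_m+2\theta_s\Sigma)^{-1}=c^{-1}I_m$, everything cancels and $p_{\theta_s}$ turns out to have Laplace transform $\Phi(\,\cdot\,;\beta,c^{-1}\Sigma,c^{-1}\Omega)$. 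Thus $\Gamma(\beta,c^{-1}\Sigma;c^{-1}\Omega)$ exists for every $c>0$. (This is exactly the action of the natural exponential family on non-central Wishart laws established in Appendix~\ref{appendix: B}, so in the final text this will be a citation rather than a recomputation.)

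Now push $\Gamma(\beta,c^{-1}\Sigma;c^{-1}\Omega)$ forward by the automorphism $\xi\mapsto c\xi$; this replaces the argument $u$ of its Laplace transform by $cu$, and a one-line substitution in the closed formula gives Laplace transform $\Phi(\,\cdot\,;\beta,\Sigma,c^{-1}\Omega)$. Hence $\Gamma(\beta,\Sigma;c^{-1}\Omega)$ exists for every $c>0$, and since $c\mapsto c^{-1}$ is a bijection of $(0,\infty)$ onto itself, $\Gamma(\beta,\Sigma;t\Omega)$ exists for every $t>0$. The remaining case $t=0$ follows by letting $t\downarrow 0$: by linearity of the exponent in $\Omega$, $\Phi(u;\beta,\Sigma,t\Omega)=\det(I_m+2u\Sigma)^{-\beta}e^{-2t\tr(\Omega u\Sigma(I_m+2u\Sigma)^{-1})}$ converges pointwise on $\mathcal S_m^+$ to $\det(I_m+2u\Sigma)^{-\beta}$, which is continuous and takes the value $1$ at $u=0$, so a standard continuity theorem for Laplace transforms of measures on $\mathcal S_m^+$ shows the weak limit exists and equals $\Gamma(\beta,\Sigma)=\Gamma(\beta,\Sigma;0)$.

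The one genuinely delicate step is the range $t>1$, i.e.\ $s\in(-\frac12,0)$: there the weight $e^{-\tr(\theta_s\xi)}$ is unbounded on $\mathcal S_m^+$, so one must know that $\Phi(\theta_s;\beta,\Sigma,\Omega)$ is finite --- equivalently, that the closed formula really is the moment generating function on all of $\{\theta:\Sigma^{-1}+2\theta\succ 0\}$ and not only on $\mathcal S_m^+$. This is precisely what Appendix~\ref{appendix: B} supplies; for $s\ge 0$ the tilting weight is bounded and nothing needs to be checked. Everything else --- the two factorisation identities, the cancellations, the automorphism substitution, and the $t=0$ limit --- is routine.
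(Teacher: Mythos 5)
Your argument is correct, and it is in substance the same mechanism the paper itself uses: exponential tilting along $s\Sigma^{-1}$ (the natural exponential family, Lemma~\ref{mayx}~\ref{x2}) composed with a dilation (Lemma~\ref{mayx}~\ref{x1}), with Proposition~\ref{FLT maximal} supplying finiteness of the moment generating function on the extended domain for the unbounded-weight range $s\in(-\tfrac12,0)$, and L\'evy continuity handling $t=0$. The only difference is presentational: for this matrix-case lemma the paper simply cites \cite[Lemma 3.5\,(iii)]{gmm}, reserving the tilting-plus-dilation computation for the proof of Theorem~\ref{th: main2} in the symmetric-cone setting, whereas you carry out that computation directly for $\mathcal S_m^+$; your identification of the one genuinely delicate point (justifying the tilt for $t>1$ via the maximal domain of the Laplace transform) matches exactly how the paper handles it in the proof of Lemma~\ref{mayx}~\ref{x2}.
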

Note that for $t=0$ the standard Wishart distribution  is recovered.
\begin{proof}
\cite[Lemma 3.5 (iii)]{gmm} (replace $\Omega\leftrightarrow\Omega$, $\Sigma\leftrightarrow2\Sigma$, $\widetilde \Omega=t\Omega$,
$\widetilde \Sigma\leftrightarrow 2\Sigma$). 
\end{proof}

\subsection{Proof of Theorem \ref{th: main}}\label{proof th1}
\begin{proof}
Suppose $\Gamma(\beta,\Sigma;\Omega)$ exists. By Lemma \ref{lem: red ext} this is implies
the existence of $\Gamma(\beta,I_m;t\Omega)$, for each $t\geq 0$. Assume a random matrix $S(t)\sim \Gamma(\beta,I_m;t\Omega)$, each $t\geq 0$, as random variables on a probability space $(\Omega,\mathcal F,\mathbb P)$ with expectation operator $\mathbb E$.

Denote by $(a)_\kappa$ the ``generalized Pochhammer symbol"
\[
(a)_\kappa:=\prod_{i=1}^{l(\kappa)} (a-\frac{1}{2}(i-1))_{\kappa_i},
\]
where the classical Pochhammer symbol is used, which is defined for $b\in\mathbb R$ as
\begin{equation}\label{eq: ph}
 (b)_0:=1,\quad (b)_k:=b (b-1)\dots (b-k+1),\quad k\in\mathbb N.
\end{equation}

By \cite[Equations (12)]{peddada1991proof}, for any $\kappa\in\mathcal E_m$ the following moment formula holds (note the slightly rewritten formula)
\begin{equation}\label{eq: moment}
\mathbb E C_\kappa(S(t))=C_\kappa(I_m)\sum_{\sigma}{\kappa \choose \sigma}\frac{(\beta)_\kappa}{(\beta)_\sigma}\frac{C_\sigma(t\Omega)}{C_\sigma(I_m)}.
\end{equation}
If $\beta\geq (m-1)/2$, nothing need to be shown. Suppose therefore $\beta<(m-1)/2$, and $k=\rank(\Omega)$. If $k=0$, then $\Omega=0$, and there is nothing to show. To outline a rigorous proof for \cite[Theorem 2]{peddada1991proof} of Peddada and Richards, and their result's extension including the rank condition, 
the rank one case is shown first: So assume $\rank(\Omega)=1$ and $m\geq 2$. For $\kappa=(1,1)=1^2$, $C_{\kappa}(\Omega)=0$, and therefore eq.~\eqref{eq: moment} yields
\begin{align*}
\mathbb E C_{1^2}(S(t))&=C_{1^2}(I_m)\left((\beta)_{1^2}+\frac{(\beta)_{1^2}}{(\beta)_1}\frac{C_1(t\Omega)}{C_1(I_m)}\right)\\&=C_{1^2}(I_m) \left(\beta(\beta-1/2)+(\beta-1/2){\kappa \choose 1}\frac{C_1(t\Omega)}{C_1(I_m)}\right).
\end{align*}
Since each zonal polynomail value is strictly positive (Lemma \ref{lem: zonal}), $\beta\in [0,1/2)$ is impossible, as it leads to strictly negative expectation, whence $2\beta\geq 1=\rank(\Omega)$.

For the general case $\rank(\Omega)=k$, consider $\kappa=1^{k+1}$. Then
\begin{align*}
\mathbb E C_\kappa(S(t))&=C_\kappa(I_m)\left((\beta)_{1^{k+1}}+(\beta)_{1^k}/\beta{\kappa \choose 1}\frac{C_{1}(t\Omega)}{C_{1}(I_m)}+\dots\right.\\&\left.\qquad+(\beta-k/2){\kappa \choose 1^{k}}\frac{C_{1^k}(t\Omega)}{C_{1^k}(I_m)}\right).
\end{align*}
The last summand is the leading coefficient in $t$. Furthermore, the contiguous coefficient
${\kappa\choose 1^k}$ in this last summand is strictly positive, due to Lemma \ref{lem: almost clear pos}.
Hence $\beta\in [0,k/2)$ implies that $\mathbb E C_\kappa(S(t))<0$, a mere impossibility. Whence $2\beta\geq k$.

It remains to show that $2\beta \in \{k/2,\dots,(m-2)/2\}$. For small $t$, the zero order coefficient in \eqref{eq: moment} dominates all other coefficients, implying thus
\[
(\beta)_\kappa \geq 0
\]
for each $\kappa$. In particular, for each $1\leq l\leq m$, and $\kappa=1^{l+1}$, 
\[
(\beta)_\kappa=\beta (\beta-1/2)\dots (\beta-l/2)\geq 0,
\]
which is violated, unless $\beta\in  \{k/2,\dots,{m-2}/2\}$ (for instance, if $\beta\in (k/2,(k+1)/2)$,
take $l=k+1$, and thus $(\beta)_\kappa<0$).
\end{proof}

\section{The Wishart Distribution on Symmetric Cones}\label{sec: symmetric cones}


Let $C$ be an irreducible symmetric cone in a finite dimensional vector space $V$, $\dim V=n$.  We denote by $r$ the rank of the associated simple Euclidean Jordan algebra, with unit element $e$, and let $d$ its Peirce invariant. The inner product on $V$ is chosen as $\langle x,y\rangle:=\tr(xy)$. For a brief review on symmetric cones see Appendix \ref{appendix: C}.

The generalized Pochhammer symbol, parameterized in $\alpha>0$ (whose dependence is suppressed in the following) is given by
\[
(a)_\kappa:=\prod_{i=1}^{l(\kappa)} (a-\frac{1}{\alpha}(i-1))_{\kappa_i},
\]
where the classical Pochhammer symbol is used (see eq.~\eqref{eq: ph}).

\subsection{Zonal Polynomials: Definition and basic properties}
Let $K=O(V)\cap G$, where O(V) is the orthogonal group of $V$ and $G$ is the connected component of the identity in the linear automorphism group
of $C$. Let $dk$ be the normalized Haar measure on the compact abelian group $K$. The spherical polynomials
are defined by
\begin{equation}\label{def: zo1}
\Phi_\kappa(x):=\int_K \Delta_\kappa(kx) dk,\quad \kappa\in\mathcal E_m,\quad m\geq 1,
\end{equation}
where $\Delta_\kappa$ denotes the generalized power (see Appendix \ref{zonal}). Up to a postive constant\footnote{The constant $d_\kappa=1/\|\Delta_\kappa\|_\Sigma$ is computed explicitly in \cite[Proposition XI.4.1 and Proposition XI.4.3]{Faraut}.}
\begin{equation}\label{zonal coefficient}
c_\kappa=d_\kappa\frac{\vert \kappa\vert!}{\left(\frac{n}{r}\right)_\kappa}
\end{equation}
they are equal to the zonal polynomials:
\begin{equation}\label{def: zo2}
Z_\kappa(x)=c_\kappa \Phi_\kappa(x).
\end{equation}

This normalization is tailored in such a way that
\begin{equation}\label{eq: zonal identity}
\tr(x)^k=\sum_{\vert \kappa\vert=k} Z_\kappa(x)
\end{equation}
and thus \cite[Proposition XII.1.3 (i)]{Faraut}
\[
e^{tr(x)}=\sum_{\kappa}d_\kappa\frac{d_\kappa}{\left(\frac{n}{r}\right)_\kappa}\Phi_\kappa(x),  \quad x\in V.
\]

An immediate consequence of a functional relationship for zonal polynomials \cite[Corollary XI.3.2]{Faraut} is the following 
\begin{lemma}\label{lem 1x}
\begin{equation}
\int_K Z_\kappa(P(y^ {1/2})kx)dk=\frac{Z_\kappa(y)Z_\kappa(x)}{Z_\kappa(e)}.
\end{equation}
\end{lemma}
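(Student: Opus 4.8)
The statement to prove is Lemma~\ref{lem 1x}, namely
\[
\int_K Z_\kappa\bigl(P(y^{1/2})\,k\,x\bigr)\,dk=\frac{Z_\kappa(y)\,Z_\kappa(x)}{Z_\kappa(e)}.
\]

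The plan is to reduce everything to the quoted functional equation for spherical polynomials, \cite[Corollary XI.3.2]{Faraut}, which (in the normalization used there) states that for all $x,y$ in the cone,
\[
\int_K \Phi_\kappa\bigl(P(y^{1/2})\,k\,x\bigr)\,dk=\Phi_\kappa(y)\,\Phi_\kappa(x).
\]
Here $\Phi_\kappa$ is the spherical polynomial normalized so that $\Phi_\kappa(e)=1$, as in \eqref{def: zo1}; this normalization is exactly what makes the right-hand side a clean product with no extra constant. So the first step is simply to record this identity and check that the normalization convention in \cite{Faraut} agrees with \eqref{def: zo1} (it does: $\Phi_\kappa(e)=\int_K\Delta_\kappa(ke)\,dk=\Delta_\kappa(e)=1$, since $K$ fixes $e$ and $\Delta_\kappa(e)=1$).

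The second step is bookkeeping with the constant $c_\kappa$ from \eqref{zonal coefficient}. By \eqref{def: zo2} we have $Z_\kappa=c_\kappa\Phi_\kappa$, so substituting into the left-hand side and pulling the constant out of the integral gives
\[
\int_K Z_\kappa\bigl(P(y^{1/2})kx\bigr)\,dk
= c_\kappa\int_K \Phi_\kappa\bigl(P(y^{1/2})kx\bigr)\,dk
= c_\kappa\,\Phi_\kappa(y)\,\Phi_\kappa(x).
\]
On the other hand, again by \eqref{def: zo2}, $Z_\kappa(y)Z_\kappa(x)/Z_\kappa(e)=c_\kappa^2\Phi_\kappa(y)\Phi_\kappa(x)/\bigl(c_\kappa\Phi_\kappa(e)\bigr)=c_\kappa\,\Phi_\kappa(y)\Phi_\kappa(x)$, using $\Phi_\kappa(e)=1$. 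The two sides match, which completes the argument.

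The only genuinely delicate point — and the step I would be most careful about — is matching conventions: one must make sure that the $P(y^{1/2})kx$ appearing here is literally the argument in \cite[Corollary XI.3.2]{Faraut} (the quadratic representation $P$ and the square root in the Jordan algebra, composed with the action of $k\in K$), and that Faraut--Korányi's spherical functions are normalized to value $1$ at $e$ rather than, say, to leading coefficient $1$. Once the normalization is pinned down, there is no real computation: the lemma is just $Z_\kappa=c_\kappa\Phi_\kappa$ fed through the multiplicative property of $\Phi_\kappa$, with the division by $Z_\kappa(e)=c_\kappa$ precisely absorbing the extra factor of $c_\kappa$ that the normalization change introduces. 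No obstacle beyond this is expected; everything else is a one-line substitution.
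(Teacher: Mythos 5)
Your proof is correct and follows exactly the route the paper intends: the paper derives the lemma as an ``immediate consequence'' of \cite[Corollary XI.3.2]{Faraut}, which is precisely the product formula $\int_K \Phi_\kappa(P(y^{1/2})kx)\,dk=\Phi_\kappa(y)\Phi_\kappa(x)$ you invoke (with $g=P(y^{1/2})$, $ge=y$). Your bookkeeping with $Z_\kappa=c_\kappa\Phi_\kappa$ and $Z_\kappa(e)=c_\kappa$ correctly supplies the normalization details the paper leaves implicit.
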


\subsection{Zonal Polynomials are normalized Jack symmetric functions}

The functions $Z_\kappa$ are special cases of normalized Jack polynomials, when considered as functions of the eigenvalues of $x$:
\begin{theorem}\label{th: jack and zonal}
Let $\alpha=2/d$, where $d$ is the Peirce invariant. Then for any $\kappa\in\mathcal E$, 
\begin{equation}\label{eq: rel}
\Phi_\kappa(x)=  \frac{J_\kappa(\lambda(x);2/d)}{J_\kappa(1_{l(\kappa)};2/d)},
\end{equation}
where $\lambda(x)=(\lambda_1,\dots,\lambda_r)$ are the eigenvalues of $x$.
\end{theorem}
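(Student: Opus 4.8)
The plan is to identify both sides of \eqref{eq: rel} as eigenfunctions of the same differential operator and then pin down the normalization. First I would recall that the spherical polynomial $\Phi_\kappa$, viewed as a function of the eigenvalues $\lambda=(\lambda_1,\dots,\lambda_r)$ of $x$, is a symmetric polynomial of degree $|\kappa|$ in $r$ variables. The key classical fact (Faraut--Korányi, \cite[Ch.~XIV]{Faraut}) is that the radial part of the Laplacian on the symmetric cone $C$, when restricted to $K$-invariant functions and written in the eigenvalue coordinates, is precisely a Laplace--Beltrami-type operator of the form \eqref{eq: eigen alpha} with $m=r$ and the geometric parameter $\alpha=2/d$; moreover $\Phi_\kappa$ is an eigenfunction of this operator. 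Thus the strategy is: (1) show $\Phi_\kappa(\lambda)$ is an eigenfunction of $D(2/d)$ on $r$ variables; (2) compute its eigenvalue and match it with $e_\kappa(2/d)$; (3) invoke Stanley's theorem (quoted above) that the $J_\mu(\,;2/d)$ are the only eigenfunctions, so $\Phi_\kappa$ must be a scalar multiple of $J_\kappa(\,;2/d)$; (4) fix the scalar by evaluating at a convenient point.

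For step (1) and (2), I would use the explicit expression for the radial Laplacian on a symmetric cone in terms of the Gindikin $\Gamma$-function data; equivalently, one can use the known second-order ODE/PDE satisfied by the generalized power $\Delta_\kappa$ after averaging over $K$. Since $\Phi_\kappa$ is homogeneous of degree $|\kappa|$ and its leading monomial (in dominance order on the eigenvalues) is $\lambda_1^{\kappa_1}\cdots\lambda_r^{\kappa_r}$ — this comes from the definition of $\Delta_\kappa$ as a product of principal minors and the fact that averaging over $K$ does not change the top term — the eigenvalue is forced to be the coefficient picked out by applying $D(2/d)$ to that leading monomial, which is exactly $e_\kappa(2/d)$ by the computation indicated in the Remark after Stanley's theorem. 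That gives steps (1)--(3) at once: $\Phi_\kappa(\lambda(x)) = c \, J_\kappa(\lambda(x);2/d)$ for some constant $c$.

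For step (4), the normalization, I would evaluate both sides at $x$ with eigenvalues $1_{l(\kappa)}$, i.e.\ at a rank-$l(\kappa)$ idempotent (or, more cleanly, at the identity $e$ and use the stability/restriction properties of Jack polynomials and spherical polynomials in the number of variables). The cleanest route is: $\Phi_\kappa$ is normalized in \eqref{def: zo1}--\eqref{eq: zonal identity} so that $\Phi_\kappa(e)=1$ whenever $l(\kappa)\le r$, which forces $c = 1/J_\kappa(1_r;2/d)$; then one uses the standard fact that $J_\kappa(1_r;\alpha)/J_\kappa(1_{l(\kappa)};\alpha)$ is accounted for by the vanishing of the extra variables, so that $\Phi_\kappa(x)=J_\kappa(\lambda(x);2/d)/J_\kappa(1_{l(\kappa)};2/d)$ as stated. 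Alternatively one checks the normalization directly from \eqref{eq: zonal identity}: summing \eqref{eq: rel} over $|\kappa|=k$ and using the power-sum expansion of Jack polynomials should reproduce $(\lambda_1+\dots+\lambda_r)^k$ up to the $c_\kappa$ factors, confirming consistency.

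The main obstacle I expect is step (1)/(2): correctly transcribing the radial part of the invariant differential operator on the cone into the variables $\lambda_i$ and verifying it coincides with $D(2/d)$ rather than with some conjugate of it (by a power of the discriminant $\prod_{i<j}(\lambda_i-\lambda_j)$ or by a Vandermonde-type weight), and keeping track of the fact that $d$ enters through the multiplicity of the restricted roots. Once the operator is correctly identified as $D(2/d)$, the eigenvalue computation is the routine leading-term calculation already flagged in the Remark, and Stanley's uniqueness statement plus the normalization check finish the proof.
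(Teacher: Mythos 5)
Your proposal follows essentially the same route as the paper: identify the radial part of the invariant Laplace--Beltrami operator on the cone with Stanley's operator $D(2/d)$ (via the eigenfunction property of $\Delta_\kappa$ and averaging over $K$), read off the eigenvalue $e_\kappa(2/d)$ from the leading monomial, invoke Stanley's uniqueness to conclude $\Phi_\kappa$ is proportional to $J_\kappa(\,\cdot\,;2/d)$, and fix the constant by $\Phi_\kappa(e)=1$. Your care about the $J_\kappa(1_r;\alpha)$ versus $J_\kappa(1_{l(\kappa)};\alpha)$ normalization is warranted, since the paper's own proof passes over exactly that point.
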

\begin{example}
For $d=1$, $k=2$ and $m=2$, $Z_\kappa=C_\kappa$, and
\begin{equation}\label{eq: zonal sum}
\sum_{\vert \kappa\vert=k}C_\kappa(z)=\tr(z)^k=(\lambda_1+\lambda_2)^2=(\lambda_1^2+\lambda_2^2+\frac{2}{3}\lambda_1\lambda_2)+\frac{4}{3}\lambda_1\lambda_2,
\end{equation}
where ($\lambda_1,\lambda_2$) are the eigenvalues of the symmetric positive semidefinite matrix $z$. On the other hand, the Jack polynomials of
degree $2$ are given by (\cite[Table 1, $\alpha=2$]{MOPS})
\begin{equation}\label{eq: jack formulas}
J_{(1,1)}(\lambda;\alpha=2)=2 \lambda_1\lambda_2,\quad J_{(2)}(\lambda;\alpha=2)=3 (\lambda_1^2+\lambda_2^2)+2 \lambda_1\lambda_2.
\end{equation}
By equation \eqref{eq: rel} the zonal polynomials are multiplies of the Jack polynomials, and by comparing \eqref{eq: jack formulas} and \eqref{eq: zonal sum}
we must have the following relationship
\[
C_{(1,1)}=\frac{2}{3}
J_{(1,1)}(,;\alpha=2),\quad C_{(2)}=\frac{1}{3}
J_{(2)}(,;\alpha=2).
\]
\end{example}
A consequence of Theorem \ref{th: jack and zonal} is the following crucial link between the generalized binomial coefficients
in the context of symmetric cones, and those defined via Jack polynomials:
\begin{remark}\label{rembrandt}
By the definition \eqref{def: zo1}, we have $\Phi_\kappa(e)=1$. Therefore $c_\kappa=Z_\kappa(e)$. The generalized binomial coefficients of \cite[p. 343]{Faraut}, are defined by the relationship
\[
\Phi_\mu(e+x)=\sum_{\vert \nu\vert\leq \vert \mu\vert}{ \mu \choose \nu}\Phi_\nu(x).
\]
Since $\Phi_\kappa(e)=1$, for each $\kappa$, we conclude that 
\[
{ \mu \choose \nu}={\mu \choose\nu}_{2/d},
\]
where the latter are the generalized binomial coefficients \eqref{def: bin} for Jack polynomials of parameter $\alpha=2/d$.
\end{remark}

\begin{proof}[Proof of Theorem \ref{th: jack and zonal}]
By \cite[Proposition VI.4.2]{Faraut}, the Laplace-Beltrami operator is of the form
\begin{align*}
L\Phi_\kappa(x)&=\sum_{i=1}^r \lambda_i^2 \frac{\partial^2 \Phi_\kappa}{\partial \lambda_i^2}+d\sum_{i\neq j}\frac{\lambda_i \lambda_j}{\lambda_i-\lambda_j}\frac{\partial \Phi_\kappa}{\partial \lambda_i}+\frac{n}{r}\sum_{i=1}^r \lambda_i \frac{\partial\Phi_\kappa}{\partial \lambda_i}\\
&=\sum_{i=1}^r \lambda_i^2 \frac{\partial^2 \Phi_\kappa}{\partial \lambda_i^2}+d\sum_{i\neq j}\frac{\lambda_i^2}{\lambda_i-\lambda_j}\frac{\partial \Phi_\kappa}{\partial \lambda_i}+\left(\frac{n}{r}-(r-1)d\right)\sum_{i=1}^r \lambda_i \frac{\partial\Phi_\kappa}{\partial \lambda_i},
\end{align*}
where $(\lambda_1,\dots,\lambda_r)$ are the eigenvalues of $x$. By the Euler equation for the homogeneous function $\Phi_\kappa$, we thus obtain
\begin{equation}\label{eq: eigen id}
L\Phi_\kappa-\left(\frac{n}{r}-(r-1)d\right)\vert\kappa\vert\Phi_\kappa= \frac{2}{\alpha }D(\alpha)\Phi_\kappa
\end{equation}
for $\alpha=2/d$.

Furthermore, by \cite[Exercise VII.2]{Faraut}, $\Delta_\kappa$ is an eigenfunction of the Laplace-Beltrami operator, and therefore also $\Phi_\kappa$ is an eigenfunction of $L$, and by \eqref{eq: eigen id},
it is an eigenfunction of $dD(2/d)$ also. Since (the symmetrization of) the monomial $t_1^{\kappa_1}\dots t_m^{\kappa_m}$ is the highest order term of
$\Phi_\kappa$, and since (see the proof of \cite[Theorem 7.2.2]{muirhead2009aspects} )
\[
dD(2/d)t_1^{\kappa}\dots t_m^{\kappa_m}=e_{\kappa}(\alpha) t_1^{\kappa_1}t_1^{\kappa}\dots t_m^{\kappa_m}+\text{  lower order terms},
\]
its eigenvalue equals $e_\kappa(2/d)$. Thus, by \cite[Theorem 3.1]{stanley1989some},
\[
\Phi_\kappa=\sum _{\mu\leq \kappa}g_{\mu \kappa} J_\mu(; 2/d).
\]
By the second part of the proof of \cite[Proposition 5.1]{stanley1989some}, it follows that
\[
\Phi_\kappa(\cdot)=f_{\kappa}(d) J_\kappa(\cdot;2/d),
\]
for some constant $f_{\kappa}(d)$.

By the definition \eqref{def: zo1}, we have $\Phi_\kappa(e)=1$, hence
\[
f_\kappa(d)=J_\kappa(1_{l(\kappa)};2/d)^{-1}.
\]
 \end{proof}

Using Remark \ref{rembrandt} we have the following generalization of \cite[Theorem 7.6.3]{muirhead2009aspects}:
\begin{proposition}\label{prop43}
\begin{equation}
\Delta (A+e)^{-\beta}\int_K e^{-\langle(A^{-1}+e)^{-1},k(\Omega)\rangle}dk=\sum_{k=0}^\infty \sum_{\kappa:\vert\kappa\vert=k}(-1)^k\frac{L_\kappa^\beta(-\Omega)Z_\kappa(A)}{k!},
\end{equation}
where the generalized Laguerre polynomials (indexed by $\kappa$) are defined by (\cite[p.343]{Faraut})
\[
L_\kappa^\beta(x):=\sum_{\sigma}{\kappa \choose \sigma}_{2/d}\frac{(\beta)_\kappa}{(\beta)_\sigma}\frac{Z_\sigma(-x)}{Z_\sigma(e)}.
\]
\end{proposition}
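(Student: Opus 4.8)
The plan is to mimic the classical argument behind Muirhead's Theorem 7.6.3, replacing the matrix zonal polynomials by the spherical polynomials $\Phi_\kappa$ on the symmetric cone and using Lemma \ref{lem 1x} in place of the orthogonal-group averaging identity. First I would expand the Gaussian factor $\int_K e^{-\langle (A^{-1}+e)^{-1},k\Omega\rangle}dk$ by plugging the series $e^{\tr(z)}=\sum_\kappa d_\kappa^2/\left(\tfrac{n}{r}\right)_\kappa\,\Phi_\kappa(z)$, equivalently $e^{\tr(z)}=\sum_k \tfrac{1}{k!}\sum_{|\kappa|=k}Z_\kappa(z)$, applied to $z=-(A^{-1}+e)^{-1}$ acted upon by $k\in K$. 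Using Lemma \ref{lem 1x} with $y=\Omega$ (or the relevant argument), the $K$-average of $Z_\kappa$ of a product collapses to $Z_\kappa(\Omega)Z_\kappa\big((A^{-1}+e)^{-1}\big)/Z_\kappa(e)$, up to the appropriate use of the automorphism $P(y^{1/2})$; this reduces the left-hand side to a single series in $\kappa$ with an explicit dependence on $A$ through $\Delta(A+e)^{-\beta}Z_\kappa\big((A^{-1}+e)^{-1}\big)$.

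The second step is to expand this remaining $A$-dependent factor in zonal polynomials $Z_\kappa(A)$ and identify the coefficients with the generalized Laguerre polynomials $L^\beta_\kappa$. Here I would use the generalized binomial expansion of Remark \ref{rembrandt}, namely $\Phi_\mu(e+x)=\sum_{|\nu|\le|\mu|}\binom{\mu}{\nu}\Phi_\nu(x)$ with $\binom{\mu}{\nu}=\binom{\mu}{\nu}_{2/d}$, together with the functional equation $\Delta(A+e)=\Delta(e)\,\Delta(A)\,\Delta(A^{-1}+e)$ (valid on the cone) to rewrite $\Delta(A+e)^{-\beta}\Delta_\kappa\big((A^{-1}+e)^{-1}\big)$ in terms of generalized powers of $A$ and of $A+e$. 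The homogeneity of $\Delta_\kappa$ and the identity $\Delta_\kappa(x^{-1})=\Delta_{\kappa^*}(x)^{-1}$ (with $\kappa^*$ the reversed index), plus the defining relation $L^\beta_\kappa(x)=\sum_\sigma \binom{\kappa}{\sigma}_{2/d}\frac{(\beta)_\kappa}{(\beta)_\sigma}\frac{Z_\sigma(-x)}{Z_\sigma(e)}$ from \cite[p.~343]{Faraut}, let one recognize the coefficient of $Z_\kappa(A)$ as $(-1)^{|\kappa|}L^\beta_\kappa(-\Omega)/|\kappa|!$, after reindexing the double sum. Throughout, absolute convergence of the series (guaranteed because $\int_K e^{-\langle\cdot,k\Omega\rangle}dk$ is entire in $A$ in the relevant region and each $\Phi_\kappa\ge 0$ on the cone by the nonnegativity of the integrand in \eqref{def: zo1}) justifies the interchange of summations.

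The main obstacle I expect is bookkeeping the correct normalizing constants $c_\kappa=Z_\kappa(e)=d_\kappa|\kappa|!/\left(\tfrac{n}{r}\right)_\kappa$ and the Pochhammer factors $(\beta)_\kappa,(\beta)_\sigma$ so that they assemble exactly into $L^\beta_\kappa$; in the matrix case ($d=1$, $\alpha=2$, $n/r=(m+1)/2$) this is Muirhead's computation, and the symmetric-cone version requires only that every step used there has a Jordan-algebraic counterpart — the functional equation for $\Delta$, the identity of Lemma \ref{lem 1x}, and the binomial expansion of Remark \ref{rembrandt} — but verifying that the parameter $n/r$ (rather than some other invariant) appears in the right places, and that $\binom{\mu}{\nu}$ in Faraut–Korányi's normalization really coincides with $\binom{\mu}{\nu}_{2/d}$ as asserted in Remark \ref{rembrandt}, is where care is needed. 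Once those identifications are in place, the proposition follows by matching coefficients of $Z_\kappa(A)$ on both sides.
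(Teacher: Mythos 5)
The paper itself does not derive this identity: its proof is a one--line reduction, substituting $\Omega=-u$, $x=-A$ into \cite[Exercise XV.3]{Faraut} and invoking the homogeneity of $Z_\kappa$. Your plan is instead a from--scratch derivation in the style of \cite[Theorem 7.6.3]{muirhead2009aspects}, i.e.\ essentially a solution of that exercise, so it is a legitimately different (more self-contained) route. Your first step is sound: expanding $e^{-\langle(A^{-1}+e)^{-1},k\Omega\rangle}$ via \eqref{eq: zonal identity} and averaging over $K$ with Lemma \ref{lem 1x} reduces the left-hand side to
\begin{equation*}
\Delta(A+e)^{-\beta}\sum_{\sigma}\frac{(-1)^{|\sigma|}}{|\sigma|!}\,
\frac{Z_\sigma\bigl((A^{-1}+e)^{-1}\bigr)Z_\sigma(\Omega)}{Z_\sigma(e)},
\end{equation*}
and the absolute convergence argument you sketch is fine.

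The gap is in your second step. Comparing coefficients of the linearly independent functions $Z_\sigma(\Omega)/Z_\sigma(e)$ on both sides, what you must prove for each fixed $\sigma$ is
\begin{equation*}
\Delta(A+e)^{-\beta}\,Z_\sigma\bigl((A^{-1}+e)^{-1}\bigr)
=\frac{|\sigma|!}{(\beta)_\sigma}\sum_{\kappa}\frac{(-1)^{|\kappa|-|\sigma|}}{|\kappa|!}\,{\kappa\choose\sigma}_{2/d}\,(\beta)_\kappa\,Z_\kappa(A),
\end{equation*}
and none of the tools you list can produce the Pochhammer ratio $(\beta)_\kappa/(\beta)_\sigma$: the binomial expansion of Remark \ref{rembrandt} is a purely polynomial identity for $\Phi_\mu(e+x)$, while the functional equation $\Delta(A+e)=\Delta(A)\Delta(A^{-1}+e)$, homogeneity, and $\Delta_\kappa(x^{-1})=\Delta_{\kappa^*}(x)^{-1}$ are all independent of $\beta$. ``Recognizing'' the coefficient of $Z_\kappa(A)$ as $(-1)^{|\kappa|}L_\kappa^\beta(-\Omega)/|\kappa|!$ presupposes that the factors $(\beta)_\kappa/(\beta)_\sigma$ are already present, so as written the argument is circular exactly at the crucial point. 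The missing ingredient is the Gamma integral with signature, $\int_{C}e^{-\langle z,x\rangle}\Delta(x)^{\beta-n/r}\Phi_\sigma(x)\,dx=\Gamma_{C}(\beta)\,(\beta)_\sigma\,\Delta(z)^{-\beta}\Phi_\sigma(z^{-1})$ (the symmetric-cone analogue of \cite[Theorem 7.2.7]{muirhead2009aspects}), from which the displayed expansion is obtained by expanding the exponential under the integral and using the binomial formula; this is precisely where $(\beta)_\kappa$ enters in Muirhead's proof and in the solution of \cite[Exercise XV.3]{Faraut}. With that identity added your outline closes; without it the key step does not go through.
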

\begin{proof}
Use \cite[Exercise XV.3]{Faraut} with $\Omega=-u$, $x=-A$ and the homogeneity of $Z_\kappa$ of degree $\vert\kappa\vert=k$.
\end{proof}

\subsection{The Wishart distribution}

The Wishart distribution on symmetric cones (\cite[Corollary 3]{MassamNeher} and \cite[(3.17)]{casalisletac1}) is defined
as follows:
\begin{definition}
Let $\beta\geq 0$ and $\Sigma$ be a regular element in $C$. The Wishart distribution $\Gamma (\beta,\Sigma)$, if exists, is defined via its Laplace transform $\Delta\left(e+P(\sqrt{\Sigma} u)\right)^{-\beta}$.
\end{definition}

An immediate consequence of the positivity result on Riesz distributions on symmetric cones
which have Laplace transforms of the form
 $\varphi_\beta(u)=\Delta(u)^{-\beta}$, is the following:

\begin{proposition}
The Wishart distribution $\Gamma(\beta,\Sigma)$ is a positive measure, if and only if $\beta$ belongs to the (so-called) Wallach set
\begin{equation}\label{eq: wallach set}
\Lambda:=\{0,\frac{d}{2}, \dots, \frac{d}{2}(r-2)\}\cup \left[\frac{d}{2}(r-1),\infty\right).
\end{equation}
\end{proposition}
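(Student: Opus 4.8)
The plan is to reduce the statement to the known characterization of the Wallach set through Riesz distributions, which are the measures with Laplace transform $\varphi_\beta(u)=\Delta(u)^{-\beta}$. First I would observe that the Wishart distribution $\Gamma(\beta,\Sigma)$ and the Riesz distribution $R_\beta$ (the measure with Laplace transform $\varphi_\beta$) are related by an explicit affine change of variables: the map $x\mapsto 2P(\sqrt\Sigma)x$ (or its inverse), being a linear automorphism of the cone $C$, pushes one measure to the other, because $\Delta(e+P(\sqrt\Sigma\,u))^{-\beta}$ is obtained from $\Delta(u)^{-\beta}$ by the substitution $u\mapsto$ (affine image) together with the cocycle property $\Delta(P(g)x)=\det(g)^{2n/(r\cdot\text{something})}\Delta(x)$ — more precisely one uses that $\varphi_\beta$ is the Laplace transform of a positive measure exactly when $\Gamma(\beta,\Sigma)$ is, since composition with a linear automorphism and multiplication by $e^{-\langle a,u\rangle}$-type factors preserve the property of being the Laplace transform of a positive measure.

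Next I would invoke the classical theorem of Gindikin (see \cite{Gindikin}, and \cite[Chapter VII]{Faraut}) stating that $\varphi_\beta(u)=\Delta(u)^{-\beta}$ is the Laplace transform of a positive measure on the closed cone $\overline C$ if and only if $\beta$ lies in the Wallach set \eqref{eq: wallach set}; for $\beta$ in the continuous part $[\frac{d}{2}(r-1),\infty)$ this measure is the absolutely continuous Riesz distribution with density proportional to $\Delta(x)^{\beta-n/r}$, while for the discrete points $\beta=\frac{d}{2}j$, $0\le j\le r-2$, it is a singular measure supported on the rank-$j$ boundary orbit. Feeding this dichotomy through the affine equivalence established in the first step yields both directions: if $\beta\in\Lambda$ then $\Gamma(\beta,\Sigma)$ exists as the pushforward of the Riesz measure, and conversely if $\Gamma(\beta,\Sigma)$ exists then pulling back gives a positive measure with Laplace transform $\Delta(u)^{-\beta}$, forcing $\beta\in\Lambda$.

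The one point requiring a little care — and the main obstacle — is making the affine equivalence between $\Gamma(\beta,\Sigma)$ and the Riesz distribution rigorous at the level of measures rather than merely formal Laplace transforms, and in particular checking that $P(\sqrt\Sigma)$ is genuinely an element of the automorphism group $G$ so that it maps the cone onto itself and transports positive measures to positive measures. This uses that for a regular (invertible) element $\Sigma\in C$ the quadratic representation $P(\sqrt\Sigma)$ belongs to $G$, together with the identity $\Delta(P(g)x)=\Delta(g\cdot e)^{\,}\Delta(x)$ for $g\in G$ — standard facts from the theory of symmetric cones recalled in Appendix \ref{appendix: C}. Once this is in place, the equivalence $\Gamma(\beta,\Sigma)\text{ exists}\iff \varphi_\beta\text{ is a Laplace transform of a positive measure}\iff\beta\in\Lambda$ is immediate, and the proposition follows.

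\begin{proof}
The Laplace transform of $\Gamma(\beta,\Sigma)$ is $u\mapsto\Delta\bigl(e+P(\sqrt\Sigma\,)u\bigr)^{-\beta}$, where we used $P(\sqrt\Sigma\,u)=P(\sqrt\Sigma)u$ only in abuse of notation; writing $g=P(\sqrt\Sigma)\in G$, which is legitimate since $\Sigma$ is regular, and using $\Delta(e+gu)=\Delta\bigl(g(g^{-1}e+u)\bigr)=\Delta(g\!\cdot\!e)\,\Delta(g^{-1}e+u)$, we see that $\Gamma(\beta,\Sigma)$ exists if and only if $u\mapsto\Delta(a+u)^{-\beta}$ is the Laplace transform of a positive measure, with $a=g^{-1}e\in C$. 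Shifting by $a$ and applying the automorphism $g^{-1}$, this holds if and only if $\varphi_\beta(u)=\Delta(u)^{-\beta}$ is the Laplace transform of a positive measure on $\overline C$. By Gindikin's theorem on Riesz distributions (\cite{Gindikin}; see also \cite[Chapter VII]{Faraut}), the latter is the case precisely when $\beta$ belongs to the Wallach set $\Lambda$ of \eqref{eq: wallach set}. This proves the proposition.
\end{proof}
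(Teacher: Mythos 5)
Your proposal is correct and follows essentially the same route as the paper, which states the proposition as an immediate consequence of the Gindikin/Riesz-measure characterization; you merely make explicit the standard reduction via the automorphism $P(\sqrt\Sigma)\in G$, the identity $\Delta(e+P(\sqrt\Sigma)u)=\Delta(\Sigma)\Delta(\Sigma^{-1}+u)$, and exponential tilting by $e^{-\langle\Sigma^{-1},\cdot\rangle}$. The only point you gloss over (as does the paper) is that in the converse direction the tilted measure's Laplace transform is a priori known to equal $\Delta(u)^{-\beta}$ only on the translated domain $\Sigma^{-1}+C^\circ$, so one should invoke uniqueness/analytic continuation of Laplace transforms (in the spirit of Lemma \ref{lem adpta}) to identify it with the Riesz distribution before applying Gindikin's theorem.
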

We shall extend this result for the more general case of non-central Wishart distributions, including a rank condition on the shape parameter. In accordance
with \cite[Proposition 5.5]{CKMT}, where this distribution arises naturally as time marginal of affine diffusion processes, we define the following
\begin{definition}
Let $\beta\geq 0$ and $\Sigma\in C^\circ$. The non-central Wishart distribution $\Gamma(\beta,\Sigma; \Omega)$, if exists, is defined via its Laplace transform 
\begin{equation}\label{FLT Mayerhofer Wishart}
\int e^{-\tr(u \xi)}\Gamma(\beta,\Sigma; \Omega)(d\xi):=\Delta\left(e+2P(\sqrt{\Sigma}) u\right)^{-\beta}e^{-\langle (u^{-1}+2\Sigma)^{-1},\Omega\rangle}.
\end{equation}
\end{definition}

\begin{lemma}\label{lem: zonal sym}
$Z_{\kappa}(\Omega)\geq0$. Furthermore, if $\rank(x)=k$ and $l(\kappa)=k$, then $Z_\kappa(x)>0$.
\end{lemma}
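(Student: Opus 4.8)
The plan is to mirror the proof of Lemma \ref{lem: zonal} verbatim, since $Z_\kappa$ differs from $\Phi_\kappa$ only by the strictly positive constant $c_\kappa = Z_\kappa(e)$ (see Remark \ref{rembrandt}), so it suffices to establish the two claims for the spherical polynomial $\Phi_\kappa$ defined by \eqref{def: zo1}. First I would observe that the generalized power $\Delta_\kappa$ is nonnegative on the cone $C$: indeed, for $x \in C$ the subdeterminants (minors) $\Delta_i(x)$ appearing in the product $\Delta_\kappa(x) = \Delta_1(x)^{\kappa_1-\kappa_2}\cdots\Delta_r(x)^{\kappa_r}$ are all nonnegative, being principal minors of an element of the symmetric cone (this is the Jordan-algebraic analogue of the fact used in Appendix \ref{zonal}), and since the exponents $\kappa_i - \kappa_{i+1} \geq 0$ and $\kappa_r \geq 0$, the product is nonnegative. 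Because $K$ acts on $C$ and $dk$ is a probability measure, $\Phi_\kappa(x) = \int_K \Delta_\kappa(kx)\,dk \geq 0$, giving the first assertion $Z_\kappa(\Omega) = c_\kappa \Phi_\kappa(\Omega) \geq 0$.

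For the strict positivity statement, suppose $\rank(x) = k$ and $l(\kappa) = k$. The strategy is the compactness-and-translation argument of Lemma \ref{lem: zonal}. I would first exhibit one group element $k_0 \in K$ for which $\Delta_\kappa(k_0 x) > 0$: since $\rank(x) = k$, one can move $x$ by an element of $G$ (hence, after adjusting, by an element of $K$ acting suitably, or more directly by choosing a Jordan frame adapted to the Peirce decomposition of $x$) so that the first $k$ principal minors $\Delta_1,\dots,\Delta_k$ are strictly positive; then because $l(\kappa) = k$, only the exponents of $\Delta_1,\dots,\Delta_k$ occur with positive power in $\Delta_\kappa$ (the factors $\Delta_{k+1},\dots,\Delta_r$ carry exponent zero and contribute $1$), so $\Delta_\kappa(k_0 x) > 0$. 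By continuity of $\Delta_\kappa$ and of the $K$-action, the set $U_0 = \{k \in K : \Delta_\kappa(kx) > 0\}$ is open and nonempty. Since $K$ is compact it is covered by finitely many left-translates of $U_0$, and translation-invariance of $dk$ forces $dk(U_0) > 0$; hence
\[
\Phi_\kappa(x) \geq \int_{K \cap U_0} \Delta_\kappa(kx)\,dk > 0,
\]
and therefore $Z_\kappa(x) = c_\kappa \Phi_\kappa(x) > 0$.

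The main obstacle I anticipate is the step producing the single element $k_0$ with $\Delta_\kappa(k_0 x) > 0$: in the matrix case this is transparent because one can conjugate any rank-$k$ positive semidefinite matrix to a diagonal form with $k$ positive entries in the leading positions, but in the abstract Jordan-algebra setting one must argue via the spectral decomposition $x = \sum_{i=1}^k \lambda_i c_i$ with $\lambda_i > 0$ into a (partial) Jordan frame, and verify that one may complete $\{c_i\}$ to a full Jordan frame and apply an element of $K$ so that the leading principal minors $\Delta_1(k_0 x), \dots, \Delta_k(k_0 x)$ are all strictly positive while the rank condition makes the higher minors irrelevant. This is where the identification of $\Delta_\kappa$ with the generalized power and the structure theory recalled in Appendix \ref{appendix: C} (in particular the transitivity of $K$ on Jordan frames) must be invoked; once that is granted the remainder is the same soft compactness argument as in Lemma \ref{lem: zonal}. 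Alternatively, one could sidestep this by using Theorem \ref{th: jack and zonal} to write $\Phi_\kappa(x) = J_\kappa(\lambda(x); 2/d)/J_\kappa(1_{l(\kappa)}; 2/d)$ and invoke strict positivity of Jack polynomials evaluated at nonnegative arguments with exactly $l(\kappa)$ nonzero coordinates, which follows from the monomial expansion of $J_\kappa$ having nonnegative coefficients and a nonzero coefficient on $t_1^{\kappa_1}\cdots t_k^{\kappa_k}$.
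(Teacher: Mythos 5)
Your proposal is correct and takes essentially the same route as the paper: nonnegativity of the integrand for the first claim, and the compactness/translation-invariance argument on $K$ for strict positivity. In fact you are more careful than the paper at the one delicate point --- the paper simply asserts that $k\mapsto\Delta_\kappa(kx)$ is positive near the identity (which can fail for a general rank-$k$ element $x$), whereas you correctly locate a $k_0\in K$ with $\Delta_\kappa(k_0x)>0$ via the spectral decomposition and the transitivity of $K$ on Jordan frames before running the covering argument.
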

\begin{proof}
$Z_{\kappa}(\Omega)\geq0$, because of the nonnegative integrand in the very definition of the zonal polynomials, eq.~\eqref{def: zo1}--\eqref{def: zo2}. 

For the proof of strict positivity, note that the non-negative map $k\mapsto \Delta_\kappa(k x)$ is strictly positive in a non-empty open neighborhood $U_0$ of the identity map. Since $K$ is a compact set, it can be covered by finitely many translates of $U_0$. The translation invariance of the Haar measure implies
that strictly positive mass is assigned to $U_0$, whence
\[
 \Phi_\kappa(x)\geq \int_{u\in K\cap U_0} \Delta_\kappa(kx)dk>0.
\]
\end{proof}

\begin{theorem}\label{th: main2}
If $\Gamma(\beta,\Sigma; \Omega)$ exists, then the following conditions must hold:
\begin{enumerate}
\item $\beta\in \Lambda $, and 
\item if $\beta\in\{0,\frac{d}{2},\dots \frac{d}{2}(r-2)\}$, then  $d\rank(\Omega)\leq 2\beta$.
\end{enumerate}
\end{theorem}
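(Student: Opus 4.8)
The plan is to mimic the proof of Theorem~\ref{th: main} (Section~\ref{proof th1}) verbatim, replacing the matrix-argument zonal polynomials $C_\kappa$ by the symmetric-cone zonal polynomials $Z_\kappa$, and the half-integer steps $1/2$ by the Peirce steps $d/2$. First I would reduce to $\Sigma=e$: by the automorphism-equivariance of the non-central Wishart family recorded in Appendix~\ref{appendix: B}, existence of $\Gamma(\beta,\Sigma;\Omega)$ implies existence of $\Gamma(\beta,e;\Omega')$ for a suitable $\Omega'$ with the same rank, and then (using the analogue of Lemma~\ref{lem: red ext}, again from Appendix~\ref{appendix: B}) existence of $\Gamma(\beta,e;t\Omega')$ for every $t\ge 0$. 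Fix random variables $S(t)\sim\Gamma(\beta,e;t\Omega')$ on a common probability space.

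Next I would extract the moment formula. Expanding the Laplace transform \eqref{FLT Mayerhofer Wishart} via Proposition~\ref{prop43} and matching coefficients of $Z_\kappa$ gives, for each $\kappa\in\mathcal E_r$,
\begin{equation}\label{eq: moment sym}
\mathbb E\, Z_\kappa(S(t)) = Z_\kappa(e)\sum_{\sigma}{\kappa \choose \sigma}_{2/d}\frac{(\beta)_\kappa}{(\beta)_\sigma}\frac{Z_\sigma(t\Omega')}{Z_\sigma(e)},
\end{equation}
which is the exact symmetric-cone counterpart of \eqref{eq: moment}; here $(a)_\kappa$ is the generalized Pochhammer symbol with parameter $\alpha=2/d$ and the binomial coefficients are those from Remark~\ref{rembrandt}. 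With this in hand, the rank condition (ii) follows by taking $\kappa=1^{k+1}$ where $k=\rank(\Omega')=\rank(\Omega)$: since $l(\kappa)=k+1>k$, Lemma~\ref{lem: zonal sym} gives $Z_\kappa(t\Omega')=0$, so the sum in \eqref{eq: moment sym} runs only over $\sigma$ with $l(\sigma)\le k$; the leading term in $t$ has coefficient proportional to $(\beta)_{1^k}\,{\kappa\choose 1^k}_{2/d}\,Z_{1^k}(\Omega')/Z_{1^k}(e)$, and by Lemma~\ref{lem: zonal sym} (strict positivity, since $l(1^k)=k=\rank(\Omega')$) together with Theorem~\ref{thm1: conj1} (strict positivity of the contiguous coefficient ${1^{k+1}\choose 1^k}_{2/d}$), this coefficient has the sign of $(\beta)_{1^k}=\beta(\beta-\tfrac{d}{2})\cdots(\beta-\tfrac{d}{2}(k-1))$. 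If $0\le\beta<\tfrac{d}{2}k$, i.e. $d\rank(\Omega)>2\beta$, one checks this product (and hence $\mathbb E Z_\kappa(S(t))$ for large $t$) is strictly negative, contradicting $Z_\kappa\ge 0$ on $C$ (Lemma~\ref{lem: zonal sym}); hence $d\rank(\Omega)\le 2\beta$ whenever $\beta<\tfrac{d}{2}(r-1)$.

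Finally, for condition (i): letting $t\to 0$ in \eqref{eq: moment sym}, the zero-order term dominates, forcing $(\beta)_\kappa\ge 0$ for every $\kappa\in\mathcal E_r$. Taking $\kappa=1^{l+1}$ for $1\le l\le r-1$ gives $\beta(\beta-\tfrac d2)\cdots(\beta-\tfrac d2 l)\ge 0$, and running over all such $l$ forces $\beta\in\{0,\tfrac d2,\dots,\tfrac d2(r-2)\}\cup[\tfrac d2(r-1),\infty)=\Lambda$ exactly as in the matrix case (any $\beta$ in an open gap $(\tfrac d2 j,\tfrac d2(j+1))$ with $j\le r-2$ is killed by $l=j+1$). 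This also completes (ii), since once $\beta\in\{0,\tfrac d2,\dots,\tfrac d2(r-2)\}$ the inequality $d\rank(\Omega)\le 2\beta$ just proved is the asserted constraint.

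The main obstacle is not the combinatorial argument — which is essentially formal once \eqref{eq: moment sym} is available — but justifying that machinery in the general Jordan-algebra setting: namely (a) deriving the moment expansion \eqref{eq: moment sym} rigorously, which requires knowing that the Laplace transform \eqref{FLT Mayerhofer Wishart} is analytic on a neighborhood of $0$ so that term-by-term extraction of the $Z_\kappa$-coefficients against the distribution of $S(t)$ is legitimate (this is where Appendix~\ref{appendix: B}'s analysis of the maximal domain of the moment generating function enters, as does dominated convergence to interchange $\mathbb E$ with the zonal expansion), and (b) the reduction to $\Sigma=e$ and to arbitrary $t\ge 0$, which needs the action of linear automorphisms and of the natural exponential family on the non-central Wishart family to preserve existence and rank — again supplied by Appendix~\ref{appendix: B}. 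Once these structural facts are in place, the proof is a line-by-line transcription of Section~\ref{proof th1} with $1/2\rightsquigarrow d/2$.
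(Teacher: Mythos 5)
Your overall route is exactly the paper's: reduce to $\Sigma=e$ and to the one\mbox{-}parameter family $\Gamma(\beta,e;t\Omega')$ via Appendix~\ref{appendix: B} (Lemma~\ref{mayx}) and L\'evy continuity at $t=0$, derive the moment identity $\mathbb E\,Z_\kappa(S(t))=Z_\kappa(e)L^\beta_\kappa(-t\Omega')$ from Proposition~\ref{prop43} by matching coefficients, test against $\kappa=1^{k+1}$ for the rank condition, and let $t\to0$ to get $(\beta)_\kappa\ge 0$ for the Wallach condition. Part (i) as you present it is fine.

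However, the sign analysis in your rank-condition step is wrong as written. In the expansion
\[
\mathbb E\,Z_{1^{k+1}}(S(t))=Z_{1^{k+1}}(e)\sum_{j=0}^{k}{1^{k+1}\choose 1^{j}}_{2/d}\,\frac{(\beta)_{1^{k+1}}}{(\beta)_{1^{j}}}\,\frac{Z_{1^{j}}(t\Omega')}{Z_{1^{j}}(e)}
\]
(the $\sigma=1^{k+1}$ term drops since $Z_{1^{k+1}}(\Omega')=0$), the coefficient of the top power $t^{k}$ carries the \emph{ratio} $(\beta)_{1^{k+1}}/(\beta)_{1^{k}}=\beta-\tfrac{d}{2}k$, a single linear factor --- not the full Pochhammer product $(\beta)_{1^{k}}=\beta(\beta-\tfrac d2)\cdots\bigl(\beta-\tfrac d2(k-1)\bigr)$ that you assert. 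Your claim that this product is strictly negative throughout $[0,\tfrac d2 k)$ is false: it vanishes at $\beta=0$, and for instance with $k=3$, $d=1$, $\beta=\tfrac14$ it equals $\tfrac14\cdot(-\tfrac14)\cdot(-\tfrac34)>0$, so no contradiction would be reached for such $\beta$. The argument is saved precisely because the correct coefficient $\beta-\tfrac d2 k$ \emph{is} strictly negative on all of $[0,\tfrac d2 k)$; combined with ${1^{k+1}\choose 1^k}_{2/d}>0$ (Lemma~\ref{lem: almost clear pos}) and $Z_{1^k}(\Omega')>0$ (Lemma~\ref{lem: zonal sym}, since $l(1^k)=k=\rank(\Omega')$), it forces $\mathbb E\,Z_{1^{k+1}}(S(t))<0$ for large $t$, contradicting $Z_{1^{k+1}}\ge 0$ on $C$ and yielding $dk\le 2\beta$. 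With that one correction your proof coincides with the paper's.
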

\begin{proof}
If $2\beta\geq d(r-1)$, nothing need to be shown. Assume therefore $2\beta<d(r-1)$.

Suppose $S\sim \Gamma (\beta,\Sigma; \Omega)$. Then by the second part of Lemma \ref{mayx} \ref{x1}, $P(\sqrt{\Sigma^{-1}}S\sim\Gamma(\beta,e;\Omega')$,
where $\Omega'=P(\sqrt{\Sigma^{-1}})(\Omega)$ is of the exact same rank as $\Omega$, because $\Sigma$ is invertible. Second, by the first part of
Lemma \ref{mayx} \ref{x1}, using the map $P(\sqrt{se})$, we obtain the existence of $\Gamma(\beta,te;s\Omega')$, for each $s>0$.
By Lemma \ref{mayx} \ref{x2}, $\Gamma(\beta,e;\frac{s\Omega'}{s^2})$ exists, for any $s>0$. Setting $s=t$,
we see that $\Gamma(\beta,e;t \Omega')$ exists for any $t>0$.

As the Laplace transforms converge pointwise for $t\rightarrow 0$ to $\det(e+2u)^{-\beta}$, by L\'evy's continuity theorem for
Laplace transforms (\cite[Lemma 4.5]{CFMT}) the limit is
the Laplace transform of a probability measure (namely of $\Gamma(\beta,e)$). We conclude that $\Gamma(\beta,e;t\Omega')$
exists for any $t\geq 0$. Let $(\Omega,\mathcal F,\mathbb P)$ be a probability space that supports $S(t)\sim \Gamma(\beta,e; t\Omega')$,
for any $t\geq 0$, and denote by $\mathbb E$ the corresponding expectation operator.

Next, we establish a crucial moment formula: For any $\kappa\in\mathcal E$
\begin{equation}\label{eq: momentsc}
\mathbb E Z_\kappa(S(t))=Z_\kappa(e)L_\kappa^{\beta}(-t\Omega).
\end{equation}
To this end, we manipulate the following sum, using Lemma \ref{lem 1x} (second identity), equation \eqref{eq: zonal identity} (third identity), Lemma \ref{lem: properties} \ref{prop 3a} (fourth identity),
\begin{align*}
&\sum_{k=0}^\infty \sum_{\kappa: \vert\kappa\vert=k}(-1)^k\frac{Z_\kappa(A)\mathbb E[Z_\kappa(S(t))]}{k! Z_\kappa(e)}=\mathbb E\left[\sum_{k=0}^\infty \sum_{\kappa: \vert\kappa\vert=k}(-1)^k\frac{Z_\kappa(A)Z_\kappa(S(t))}{k! Z_\kappa(e)}\right]\\
&=\mathbb E\left[\sum_{k=0}^\infty \sum_{\kappa: \vert\kappa\vert=k}\frac{(-1)^k}{k!}\int_K Z_\kappa(P(A^{1/2}) kS(t)) dk\right]=\mathbb E\left[\int_K e^{-\tr(P(A^{1/2}) kS(t))}dk\right]\\
&=\mathbb E\left[\int_K e^{-\tr(AkS(t))}dk\right]=\int_K\mathbb E\left[ e^{-\tr(k^{-1}A S(t))}dk\right]\\
&=\int_K \Delta\left(e+k^{-1}A\right)^{-\beta}e^{-\langle ((k^{-1}A)^{-1}+ e)^{-1},t\Omega\rangle}dk\\&=\Delta\left(e+ A\right)^{-\beta}\int_K e^{-\langle ((k^{-1}A)^{-1}+ e)^{-1},t\Omega\rangle}dk\\
&=\Delta\left(e+ A\right)^{-\beta}\int_K e^{-\langle (A^{-1}+ e)^{-1},k(t\Omega)\rangle}dk\\
&=\sum_{k=0}^\infty \sum_{\kappa:\vert\kappa\vert=k}(-1)^k\frac{L_\kappa^\beta(-t\Omega)Z_\kappa(A)}{k! },
\end{align*}
the last identity being Proposition \ref{prop43}. By comparison of coefficients, we conclude that \eqref{eq: momentsc} holds.

Let $r>2$ and $\rank(\Omega)=k\geq 0$, then for $\kappa=1^{k+1}$, 
\begin{align*}
\mathbb E Z_\kappa(S(t))&=Z_\kappa(e)\left((\beta)_{1^{k+1}}+(\beta)_{1^k}/\beta{\kappa \choose 1}\frac{Z_{1}(t\Omega)}{Z_{1}(e)}+\dots\right.\\&\qquad\qquad\qquad\qquad\qquad\left.+(\beta- d k/2){\kappa \choose 1^{k}}\frac{Z_{1^k}(t\Omega)}{Z_{1^k}(e)}\right).
\end{align*}
The last summand is the leading coefficient in $t$. Furthermore, the contiguous coefficient
${\kappa\choose 1^k}$ in this last summand is strictly positive, due to Lemma \ref{lem: almost clear pos}.
Hence $\beta\in [0, dk/2)$ implies that $\mathbb E Z_\kappa(S(t))<0$, a mere impossibility. Whence $dk\leq 2\beta<d(r-1)$.

It remains to show that $2\beta \in \{dk,\dots,d(r-2)\}$. For small $t$, the zero order coefficient in \eqref{eq: momentsc} dominates all other coefficients, implying thus
\[
(\beta)_\kappa \geq 0
\]
for each $\kappa$. In particular, for each $k+1\leq l\leq r-1$, and $\kappa=1^{l+1}$, 
\[
(\beta)_\kappa=\beta (\beta-d\frac{1}{2})\dots (\beta-d\frac{l}{2})\geq 0,
\]
which is violated, if $\beta\in (d(l-1)/2,d l/2)$.

\end{proof}

\appendix

\section{Basic properties of symmetric cones}\label{appendix: C}
Let $V$ be an $n$ dimensional vector space over $\mathbb R$, equipped with an inner product $\langle\,,\,\rangle$ and let $C\subset V$ be a closed convex cone, with $C^\circ$ its interior. Let $GL(V)$
be the general linear group on $V$. $C$ is called homogeneous, if the linear automorphism group 
\[
G(C^\circ):=\{g\in GL(V)\mid g(C^\circ)=C^\circ\}
\]
 acts transitively on $C$, i.e., for each $c_1,c_2\in C$, there exists $g\in G(C^\circ)$ such that $g(c_1)=c_2$. $C$ is symmetric, if it is homogeneous and self-dual, i.e,
\[
C=\{x\in V\mid \langle x,u\rangle\geq 0\text { for any }u\in C\}.
\]
Since irreducible symmetric cones can be associated with simple Euclidean Jordan algebras in a unique way (\cite[Proposition III.4.5]{Faraut}),
we identify the vector space $V$ with its Euclidean Jordan algebra, and $C$ is the set of squares, $C=\{x^2\mid x\in V\}$. 
Let therefore $C$ be a symmetric cone, and $e$ be an identity element in $V$.

For $x\in V$, the map $L(x): V\rightarrow V$ is the left multiplication $y\mapsto xy$. For $x\in V$, the quadratic representation $P(x) : V\rightarrow V$
is then defined by
\begin{equation}\label{eq: def P}
P(x) y=2 L(x) (L(x)y)-L(x^2)y.
\end{equation}
In the case of associativity, we have due to
commutativity of multiplication,
\[
P(x)y=x^2y=xyx.
\]
Therefore, the definition of $P$ in \eqref{eq: def P} reflects the fact that multiplication is not associative for all Euclidean Jordan algebras.

Clearly the set of squares is homogeneous, because for any $x\in C$, we can write $x=y^2$ for some $y\in V$, and $P(y)e=y^2=x$.

Let $\mathbb R[\lambda]$ be the ring of polynomials over $\mathbb R$, and for fixed $x\in V$ consider the ring $\mathbb R[x]$ of polynomials over $\mathbb R$, evaluated at $x$.

The minimal polynomial of $x$ is the polynomial with leading coefficient $1$ that generates the ideal

\[
\mathcal J(x)=\{p\in\mathbb R[\lambda]\mid p(x)=0\},
\]
and then
\[
\mathbb R[x]=\mathbb R[\lambda]/\mathcal J(x).
\]

The degree of the minimal polynomial of $x\neq 0$ is precisely
\[
m(x)=\min\{k>0\mid \{e,x,\dots, x^k\} \text{ are linearly independent}\}.
\]
The rank of the Jordan algebra is defined as $r:=\max_{x\in V} m(x)$
and an element $x$ is called regular, if $m(x)=r$. The set of regular elements in $C$
is precisely the interior  $C^\circ$ of the cone.

By \cite[Proposition II.2.1]{Faraut}, there exist unique polynomials $a_1,\dots, a_r$, such that the minimal polynomial of every regular
element is given by
\[
f(\lambda;x)=\lambda^r-a_1(x)\lambda^{r-1}+\dots+(-1)^ra_r(x),
\]
and we denote $\det(x)=a_r(x)$ and $\tr(x)=a_1(x)$, the determinant and trace of $x$.

\subsection{Determinant, Trace and Quadratic Representation}
We use the trace as inner product by defining (any other symmetric bilinear form in a simple Euclidean Jordan algebra $V$ is a scalar multiple of the trace, see \cite[Theorem III.4.1]{Faraut})
\[
\langle x,y\rangle:=\tr(xy).
\]
This inner product is associative (\cite[Proposition II.4.3]{Faraut}), that is,
\begin{equation}\label{eq: ass}
\langle xy,z\rangle=\langle y, xz\rangle.
\end{equation}
In other words: the left multiplication is a self-adjoint operator. The following are frequently used in this paper:
\begin{lemma}\label{lem: properties}
\begin{enumerate}
\item \label{prop 1a} $(P(x)y)^{-1}=P(x^{-1}) y^{-1}$,
\item \label{prop 3a} $\langle P(x) y,z\rangle=\langle y, P(x)z\rangle$,
\item \label{prop 2a} $\det(P(x)y)=\det(x^2)\det(y)$.
\end{enumerate}
\end{lemma}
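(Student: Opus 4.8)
The three identities are classical facts about simple Euclidean Jordan algebras (\cite[Chapters~II--III]{Faraut}); I will indicate self-contained arguments, treating them in the order \ref{prop 3a}, \ref{prop 1a}, \ref{prop 2a}, from easiest to hardest. Part \ref{prop 3a} is immediate from the definition \eqref{eq: def P}, $P(x)=2L(x)^2-L(x^2)$: associativity of the trace form \eqref{eq: ass} says exactly that every left multiplication $L(w)$ is self-adjoint for $\langle\cdot,\cdot\rangle$, hence so are $L(x)^2$ and $L(x^2)$, and therefore so is their linear combination $P(x)$, which is precisely the assertion $\langle P(x)y,z\rangle=\langle y,P(x)z\rangle$.

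For part \ref{prop 1a}, where $x$ and $y$ are invertible so that the Jordan inverses exist, the plan is to combine three standard identities from \cite{Faraut}: the characterization $z^{-1}=P(z)^{-1}z$ of the Jordan inverse of an invertible $z$ (equivalently $P(z)z^{-1}=z$), the relation $P(z)^{-1}=P(z^{-1})$, and the fundamental formula $P(P(x)y)=P(x)P(y)P(x)$. Applying the latter two to rewrite $P(P(x)y)^{-1}$, one computes
\begin{align*}
(P(x)y)^{-1}&=P(P(x)y)^{-1}\,(P(x)y)=P(x^{-1})P(y^{-1})P(x^{-1})\,P(x)\,y\\
&=P(x^{-1})P(y^{-1})\,y=P(x^{-1})\,y^{-1},
\end{align*}
where the third equality uses $P(x^{-1})P(x)=I$ and the fourth applies $P(z)z^{-1}=z$ with $z=y^{-1}$ (together with $(y^{-1})^{-1}=y$).

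For part \ref{prop 2a} I would reduce to regular $x$. Both sides of $\det(P(x)y)=\det(x^2)\det(y)$ are polynomial in $(x,y)$ — the map $x\mapsto P(x)$ is polynomial by \eqref{eq: def P} and $\det=a_r$ is a polynomial — and $C^\circ$ is dense in $V$, so it suffices to treat $x\in C^\circ$. For such $x$ one has $P(x)\in G(C^\circ)$, and $\det$ is a relative invariant of $C^\circ$ under this group, i.e.\ $\det(g\xi)=\chi(g)\,\det(\xi)$ for all $g\in G(C^\circ)$ and $\xi\in V$ with a suitable character $\chi$ (\cite[Chapter~III]{Faraut}). Evaluating at $g=P(x)$ and $\xi=e$, and using $P(x)e=x^2$, identifies the multiplier as $\chi(P(x))=\det(x^2)$, whence $\det(P(x)y)=\det(x^2)\det(y)$ on $C^\circ$ and then on all of $V$ by continuity. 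Finally $\det(x^2)=\det(x)^2$ follows from the spectral decomposition $x=\sum_i\lambda_i c_i$ together with $\det\bigl(\sum_i\mu_i c_i\bigr)=\prod_i\mu_i$.

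The genuinely non-elementary inputs here are the fundamental formula $P(P(x)y)=P(x)P(y)P(x)$ and the relation $P(z)^{-1}=P(z^{-1})$ used in part \ref{prop 1a}, and the relative invariance of $\det$ under $G(C^\circ)$ used in part \ref{prop 2a}; these I would take verbatim from \cite{Faraut} rather than reprove, and they are the only real obstacle to a fully self-contained treatment — everything else is bookkeeping with the definition of $P$ and the associativity of the trace form.
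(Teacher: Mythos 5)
Your proof is essentially correct, but it takes a more self-contained route than the paper, which simply cites \cite{Faraut} for parts \ref{prop 1a} and \ref{prop 2a} (Propositions II.3.3(ii) and III.4.2(i) respectively) and only argues part \ref{prop 3a} directly from the self-adjointness of $L(x)$, exactly as you do. Your derivation of \ref{prop 1a} from the fundamental formula $P(P(x)y)=P(x)P(y)P(x)$ together with $P(z)z^{-1}=z$ and $P(z)^{-1}=P(z^{-1})$ is a correct reconstruction of the standard argument (and the invertibility of $P(x)y$ needed to apply $P(z)z^{-1}=z$ does follow from $P(P(x)y)=P(x)P(y)P(x)$ being invertible), and your proof of \ref{prop 2a} via the relative invariance of $\det$ under $G(C^\circ)$ and evaluation of the character at $e$ is essentially how \cite{Faraut} proves it. The trade-off is transparency versus length: the paper's citations are shorter, while your version makes visible exactly which nontrivial Jordan-algebra inputs are being used. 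One small slip: $C^\circ$ is \emph{not} dense in $V$ (e.g.\ positive definite matrices are not dense in all symmetric matrices), so the phrases ``$C^\circ$ is dense in $V$'' and ``on all of $V$ by continuity'' in part \ref{prop 2a} are wrong as stated; the correct justification is that $C^\circ$ is a nonempty \emph{open} subset of $V$, and two polynomials in $(x,y)$ that agree on a nonempty open set agree identically. With that one-line repair the argument goes through.
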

\begin{proof}
Property \ref{prop 1a} is \cite[Proposition II.3.3 (ii)]{Faraut}, and \ref{prop 2a} is \cite[Proposition III.4.2(i)]{Faraut}. Property \ref{prop 3a} follows from the fact that $L(x)$ is a self-adjoint operator, \eqref{eq: ass}.
\end{proof}

\subsection{Coordinates}
The rank $r$ and the Peirce invariant $d$ of a symmetric cone are linked to two important coordinate systems on symmetric cones, which are introduced below for a
better understanding of the paper (which is, however largely coordinate free). 

\subsubsection{Spectral Decomposition and rank}
$f\in V$ is called idempotent, if $f^2=f$, hence $f\in C$. Two idempotents $f,g$ are orthogonal, if $fg=0$. An idempotent is primitive, if it is not the sum
of two non-trivial idempotents. A set of idempotents is complete, if it sums to the unit element $e$. Any element $x\in V$ admits a spectral decomposition: There exists
a complete set of mutually orthogonal primitive idempotents $\{e_1,\dots, e_r\}$ (a so-called Jordan frame) and real numbers $\lambda_1,\dots,\lambda_r$ such that $x=\lambda_1 e_1+\dots+\lambda_r e_r$. The coefficients $\lambda_1,\dots,\lambda_r$ are called eigenvalues.

\subsubsection{Pierce Decomposition II and invariant}
Idempotents can only have eigenvalues $0,1/2$ or $1$. For $\lambda\in\{0,1/2,1\}$ and idempotents $c$, we define the linear subsaces $V(c,\lambda):=\{x\in V\mid cx=\lambda x\}$. 

Let $c_1,\dots,c_r$ be a Jordan frame. The space $V$ can be decomposed (\cite[Theorem IV.2.1]{Faraut})
in the following orthogonal sum
\[
V=\bigoplus_{i\leq j} V_{ij},
\]
where $V_{ii}=V(c_i,1)$ for $i=1,\dots, r$, and for $1\leq i<j\leq r$, $V_{ij}=V(p_i,1/2)\cap V(c_j,1/2)$. Furthermore, the orthogonal projections $P_{ij}$ on $V_{ij}$ satisfy
\begin{equation*}
P_{ii}=P(c_i), \quad P_{ij}=4L(c_i)L(c_j).
\end{equation*}

The dimension $d$ of $V_{ij}$, the Peirce invariant, 
is independent of $i,j$ and the Jordan frame. It satisfies (\cite[Corollary IV.2.6]{Faraut})
\begin{equation}
n=r+\frac{d}{2} r(r-1).
\end{equation}
For example, for $d=1$ (where $C$ is the space of positive semidefinite symmetric $r\times r$ matrices), we indeed have 
\[
n=\frac{(r+1)}{2}=r+\frac{1}{2} r(r-1).
\]
\subsection{Generalized Power Functions and Orthogonal Group}\label{zonal}

For $\kappa\in\mathcal E_m$, and $m\leq r$, the generalized power functions are defined as
\[
\Delta_\kappa(x):=\Delta_1^{\kappa_1-\kappa_2}(x)\Delta_2^{\kappa_2}(x)\dots\Delta_m(x)^{\kappa_m},
\]
where $\Delta_j$ for $j=1,\dots, r$ denotes the principal minors corresponding to the Jordan algebras
$V^{(j)}=V(c_1+\dots+c_j,1)=\{x\in V\mid x=x(c_1+\dots c_j)\}$, where $(c_1,\dots, c_r)$ is a Jordan frame. Clearly it holds that
$\Delta_\kappa(x)=\lambda_1^{\kappa_1}\dots \lambda_m^{\kappa_m}$, if $x=\lambda_1 c_1+\dots \lambda_r c_r$. 

The spherical polynomials $\Phi_\kappa$ (and thus the zonal polynomials $Z_\kappa$) arise as symmetrizations of generalized powers, see \eqref{def: zo1}. Let $O(V)$ be the orthogonal group defined by 
\[
O(V):=\{g\in GL(V)\mid \langle gc,gc\rangle=\langle c,c\rangle\text {  for all  } c\in V\}.
\]

Let $G\subset G(C^\circ)$ be the connected component of the identity, and set $K:=G\cap O(V)$. By \cite[p.5]{Faraut} it already acts transitively on $C$. Furthermore, according to \cite[Proposition I.1.9 and Proposition I.4.3]{Faraut}, there exists an element $e\in C$ such that
\begin{equation}
G(C^{\circ})\cap O(V)=G(C^\circ)_{e}:=\{g\in G(C^\circ)\mid ge=e\},
\end{equation}
the latter denoting the stabilizer of $e$,
and
\begin{equation}\label{eq stabilizer identity}
G_e=K:=G\cap O(V).
\end{equation}
A nice result concerns the automorphism group $\text{Aut}(V)$ (not to be confused with $GL(V)$), which is defined as 
\[
\text{Aut}(V):=\{A\in GL(V)\mid A(xy)=A(x) A(y)\}.
\]
By \cite[Theorem III.5.1]{Faraut}, we have
\[
\text{Aut}(V)_0=K,
\]
where $\text{Aut}(V)_0$ denotes the identity component of $\text{Aut}(V)$.

\subsection{Zonal polynomials and Symmetry}
Since $K$ is a compact abelian group, there exists a Haar measure $dk$ on it. We use its unique normalization. 

The zonal polynomials defined via \eqref{def: zo1} are symmetric functions, in the sense that they are $K$ invariant, by constructions. This implies the following:
\begin{lemma}
$Z_\kappa(x)$ depend on the eigenvalues $\lambda_1,\dots,\lambda_r$ of $x$ only.
\end{lemma}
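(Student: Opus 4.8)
The plan is to show that $Z_\kappa$ is invariant under the action of $K$ and then exploit the fact that $K$ contains a subgroup that realizes all permutations of a Jordan frame, so that $K$-invariance forces $Z_\kappa$ to be a symmetric function of the spectral coefficients. First I would recall from the very definition \eqref{def: zo1}–\eqref{def: zo2} that $Z_\kappa(x) = c_\kappa \int_K \Delta_\kappa(kx)\,dk$. For any fixed $g \in K$, a change of variables $k \mapsto kg$ in the Haar integral (using right-invariance of $dk$ on the compact group $K$) gives $Z_\kappa(gx) = c_\kappa \int_K \Delta_\kappa(kgx)\,dk = c_\kappa \int_K \Delta_\kappa(kx)\,dk = Z_\kappa(x)$. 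Hence $Z_\kappa$ is constant on the $K$-orbits in $V$.

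Next I would use the spectral decomposition: any $x \in V$ can be written $x = \lambda_1 c_1 + \dots + \lambda_r c_r$ for some Jordan frame $(c_1,\dots,c_r)$ and eigenvalues $\lambda_1,\dots,\lambda_r$. Since $K = \mathrm{Aut}(V)_0$ acts transitively on the set of Jordan frames (this is the standard fact that all Jordan frames are conjugate under the automorphism group, and in fact under its identity component — \cite[Theorem IV.2.5]{Faraut}), for any two Jordan frames there is an element of $K$ carrying one to the other. In particular, fixing one reference frame $(c_1^0,\dots,c_r^0)$, every $x$ with eigenvalues $\lambda_1,\dots,\lambda_r$ lies in the $K$-orbit of $\sum_i \lambda_i c_i^0$, so $Z_\kappa(x) = Z_\kappa(\sum_i \lambda_i c_i^0)$, a quantity depending only on the multiset $\{\lambda_1,\dots,\lambda_r\}$. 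Finally, to see the dependence is symmetric (which is implicit in "depend on the eigenvalues only", since eigenvalues come as an unordered tuple), I would note that $K$ also contains elements permuting the fixed frame $(c_1^0,\dots,c_r^0)$ arbitrarily — again by transitivity of $K$ on frames applied to the permuted frame $(c_{\pi(1)}^0,\dots,c_{\pi(r)}^0)$ — so $Z_\kappa(\sum_i \lambda_i c_i^0) = Z_\kappa(\sum_i \lambda_{\pi(i)} c_i^0)$ for every permutation $\pi$.

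The only genuine subtlety is the transitivity of the \emph{identity component} $K$ (not merely of $\mathrm{Aut}(V)$ or $G(C^\circ)\cap O(V)$) on the set of all Jordan frames; this is exactly the content cited from \cite{Faraut} in the subsection on Peirce decomposition and is harmless here, but it is the one external input the argument leans on. Everything else is a one-line Haar-measure change of variables. I would therefore present the proof as: (i) $K$-invariance of $Z_\kappa$ via right-invariance of $dk$; (ii) reduction to a fixed frame via transitivity of $K$ on frames; (iii) symmetry via the permutation elements of $K$. No lengthy computation is needed, so I would keep the write-up to a few sentences.
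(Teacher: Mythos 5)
Your proposal is correct and follows essentially the same route as the paper: establish $K$-invariance of $Z_\kappa$ from the Haar-integral definition, then invoke the transitivity of the automorphism group on Jordan frames (\cite[Theorem IV.2.5]{Faraut}) to map any $x$ to a reference element with the same eigenvalues. The paper phrases this as directly producing an automorphism $A$ with $Ac_i=d_i$ and checking $A(e)=e$ so that $A\in K$, while you make the $K$-invariance and the permutation symmetry explicit, but the substance is identical.
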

\begin{proof}
Let $x,y \in V$, with Jordan frames $c_1,\dots, c_r$ and $d_1,\dots, d_r$ and eigenvalues $\lambda_1,\dots,\lambda_r$
such that
\[
x=\sum_i \lambda_i c_i,\quad y=\sum_j \lambda_j d_j.
\]
By Theorem \cite[Theorem IV.2.5]{Faraut} there exists an automorphism $A$ such that $Ac_i=d_i$ for $i=1,\dots, r$. Since $A$ is an algebra automorphism,
and $(c_i)_{i=1}^r$ is a complete and orthogonal system of idempotents, we have
\[
A(e) A(e)=A(e^2)=A((c_1+\dots c_r)^2)=A(c_1^2+\dots +c_r^2)=A(c_1+\dots + c_r)=A(e)
\]
and therefore $A(e)=e$. Since $\text{Aut}\subseteq GL(V)$ by definition and $A(e)=e$, we conclude by \eqref{eq stabilizer identity} that $A\in K$. Hence by $K$-invariance, $Z_\kappa(x)=Z_\kappa(Ax)=Z_\kappa(y)$.
\end{proof}

\section{Ferrers Diagrams}\label{appendix: A}
The material of this section is a slight adaption of notation and results in
 \cite{stanley1989some, kaneko1993selberg}.

Any multi-index $\kappa=(\kappa_1,\dots,\kappa_m)$ with $\kappa_1\geq\dots\geq \kappa_m$ can be identified with its Ferrers diagram which is the specific partition
\[
\mathcal P_\kappa:=\{s=(i,j)|1\leq i\leq l(\kappa),1\leq j\leq \kappa_i\}.
\]
In this diagram, $\kappa_i$ are the number of squares in each row $i$. The number $\kappa^j$ of squares in each column $j$, where $1\leq j\leq \kappa_1$, 
are 
\[
\kappa^j=\#\{i\mid (i,j)\in\mathcal P_\kappa\}.
\] 
For $s=(i,j)\in \mathcal P_\kappa$, write $i(s):=i$ and $j(s):=j$ to indicate its column or row label.

The arm-length, for each $s$ in the diagram, is defined as the number of squares to the right of $s$, that is
\[
a_\kappa(s):=\kappa_i-j(s),
\]
and the leg-length is the number of squares below $s$,
\[
l_\kappa(s):=\kappa^{j(s)}-i(s).
\]

\begin{definition}
The upper hook- length is defined by
\[
h_\kappa^*(s):=l_\kappa(s)+\alpha(1+a_\kappa(s))
\]
and the lower hook-length is
\[
h^\kappa_*(s):=l_\kappa(s)+1+\alpha a_\kappa(s).
\]
\end{definition}

The following holds by \cite[Proposition 2]{kaneko1993selberg},
\begin{lemma}\label{lem: almost clear pos}
\[
{\sigma^i \choose \sigma}_\alpha=j_\sigma(\alpha)^{-1}\prod_{s\in \mathcal P_\sigma}A_{\sigma^i} \prod_{s\in \mathcal P_\sigma}B_{\sigma^i},
\]
where
\begin{equation}\label{eq: j}
j_\sigma(\alpha)=\prod_{s\in \mathcal P_\sigma} h_\sigma^*(s)\prod_{s\in\mathcal P_\sigma}h^\sigma_*(s)
\end{equation}
and
\[
A_{\sigma^i}=\begin{cases} h_\star^{\sigma}(s),\text{ if } j(s)\neq i,\\ h_\sigma^* (s)\quad\text{ otherwise}\end{cases},\quad
B_{\sigma^i}=\begin{cases} h^\star_{\sigma^i}(s),\text{ if } j(s)\neq i,\\ h^{\sigma^i}_* (s)\quad\text{ otherwise}.\end{cases}
\]
\end{lemma}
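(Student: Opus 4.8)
The identity is a purely combinatorial statement about Jack polynomials, independent of the symmetric-cone context, so the plan is to work with $J_\kappa(\,\cdot\,;\alpha)$ in $m$ variables for an arbitrary $m\ge l(\sigma)+1$ and to recover the formula from the standard one-box expansion rules, as in \cite[Proposition~2]{kaneko1993selberg}. The first step is to reduce $\binom{\sigma^i}{\sigma}_\alpha$ to a single-box quantity. Writing $\widehat J_\kappa:=J_\kappa(\,\cdot\,;\alpha)/J_\kappa(1^m;\alpha)$, the defining identity \eqref{def: bin} becomes $\widehat J_\kappa(t+1^m)=\sum_{\vert\sigma\vert\le\vert\kappa\vert}\binom{\kappa}{\sigma}_\alpha\widehat J_\sigma(t)$. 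Since $\widehat J_\kappa$ is homogeneous of degree $\vert\kappa\vert$, the polynomial $\widehat J_\kappa(t+s\cdot 1^m)$ is homogeneous of degree $\vert\kappa\vert$ in $(t,s)$ jointly; differentiating in $s$ at $s=0$ then identifies the part of $\widehat J_\kappa(t+1^m)$ that is homogeneous of degree $\vert\kappa\vert-1$ in $t$ with $(\mathcal D\widehat J_\kappa)(t)$, where $\mathcal D:=\sum_{l=1}^{m}\partial/\partial t_l$. Comparing with the degree-$(\vert\kappa\vert-1)$ part of the right-hand side, for $\kappa=\sigma^i$, gives
\[
\mathcal D\,\widehat J_{\sigma^i}=\sum_{\mu}\binom{\sigma^i}{\mu}_\alpha\,\widehat J_\mu ,
\]
the sum running over the finitely many $\mu\in\mathcal E$ with $\mu^j=\sigma^i$ for some $j$, one of which is $\sigma$ itself. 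Thus $\binom{\sigma^i}{\sigma}_\alpha$ is exactly the coefficient of $\widehat J_\sigma$ in $\mathcal D\widehat J_{\sigma^i}$.

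The second step is to evaluate this coefficient, for which one would invoke two classical facts about Jack polynomials (\cite{macdonald1998symmetric,stanley1989some}). The first is Stanley's product formula for the principal specialisation,
\[
J_\lambda(1^m;\alpha)=\prod_{s\in\mathcal P_\lambda}\bigl(m+\alpha\,a'_\lambda(s)-l'_\lambda(s)\bigr),\qquad a'_\lambda(s):=j(s)-1,\quad l'_\lambda(s):=i(s)-1 ,
\]
which makes $J_\sigma(1^m;\alpha)/J_{\sigma^i}(1^m;\alpha)$ the single hook-content factor $\bigl(m+\alpha\sigma_i-(i-1)\bigr)^{-1}$ attached to the box added to row $i$. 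The second is the explicit one-box rule for multiplication by the power sum $p_1$ (Macdonald, Stanley), whose structure constants are ratios of upper and lower hook-lengths of $\sigma$ and $\sigma^i$ supported on the row and the column of the new box, together with the norm identity $\langle J_\lambda,J_\lambda\rangle_\alpha=j_\lambda(\alpha)=\prod_{s\in\mathcal P_\lambda}h^*_\lambda(s)\prod_{s\in\mathcal P_\lambda}h^\lambda_*(s)$, which is the point at which \eqref{eq: j} enters. Feeding these into the expression for $\mathcal D\widehat J_{\sigma^i}$, the $m$-dependence cancels — as it must, $\binom{\sigma^i}{\sigma}_\alpha$ being independent of $m$ — and the surviving hook-lengths regroup into $j_\sigma(\alpha)^{-1}\prod_{s\in\mathcal P_\sigma}A_{\sigma^i}\prod_{s\in\mathcal P_\sigma}B_{\sigma^i}$: a box $s$ with $j(s)\ne i$ lies outside the column of the new box and contributes $A_{\sigma^i}(s)=h^\sigma_*(s)$, $B_{\sigma^i}(s)=h^*_{\sigma^i}(s)$, while a box with $j(s)=i$ lies in that column and contributes $A_{\sigma^i}(s)=h^*_\sigma(s)$, $B_{\sigma^i}(s)=h^{\sigma^i}_*(s)$.

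The main obstacle is this last regrouping. The one-box rule and Stanley's formula produce products that are localised on the row and column of the added box and that run partly over $\mathcal P_{\sigma^i}$ rather than $\mathcal P_\sigma$; turning them into a single product over $\mathcal P_\sigma$ with the global weight $j_\sigma(\alpha)^{-1}$ requires the standard cancellation identities comparing the arm- and leg-lengths of a box as seen in $\sigma$ and in $\sigma^i$ (these agree off the new column and are shifted by $1$ in it), and carrying this out with the conventions of Appendix~\ref{appendix: A} is exactly the content of \cite[Proposition~2]{kaneko1993selberg}. Once the formula is in hand, the inequality \eqref{eq: contiguous} and the strict positivity needed in the proof of Theorem~\ref{thm1: conj1} are immediate: for $\alpha>0$ every upper and lower hook-length is a strictly positive real, so the displayed formula exhibits $\binom{\sigma^i}{\sigma}_\alpha$ as a quotient of positive reals.
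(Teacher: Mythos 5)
Your proposal is correct and ultimately rests on the same foundation as the paper, which proves this lemma simply by citing \cite[Proposition~2]{kaneko1993selberg}; your sketch adds a (sound) outline of how that formula arises — reduction of the contiguous coefficient to the coefficient of $\widehat J_\sigma$ in $\sum_l \partial\widehat J_{\sigma^i}/\partial t_l$, then Stanley's principal specialisation, the one-box Pieri rule and the norm identity — but defers the decisive hook-length regrouping to Kaneko exactly as the paper does. No gap to report; your closing observation that all upper and lower hook-lengths are strictly positive for $\alpha>0$ is precisely what makes the lemma deliver the strict positivity used in Theorem~\ref{thm1: conj1}.
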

Note that in several formulas dependence on the $\alpha$ parameter is suppressed.

\section{Standardization of Wishart Distributions}\label{appendix: B}
\subsection{The moment generating function}
This section shows that the Laplace transform \eqref{FLT Mayerhofer Wishart} can be extended to its maximal domain, which is dictated by the blow up of the right side.
Clearly, $\det(e+P(\sqrt \Sigma)(u))>0$ for $u\in C$, and therefore the right side of \eqref{FLT Mayerhofer Wishart}  is well defined
as long as  $\det(e+P(\sqrt \Sigma)(u))>0$. Since
\begin{align*}
\det(e+P(\sqrt \Sigma)(u))&=\det(P(\sqrt \Sigma)(P(\sqrt{\Sigma^{-1}})e+u))\\&=\det(\Sigma) \det(P(\sqrt{\Sigma^{-1}})e+u)=\det(\Sigma) \det(\Sigma^{-1}+u),
\end{align*}
the right side of  \eqref{FLT Mayerhofer Wishart} is a real analytic function on the domain
\[
D_\Sigma:=-\Sigma^{-1}/2+C^\circ,
\]
but blows up as the argument $u$ approaches the boundary $\partial D_\Sigma$, since then the determinant vanishes for elements in $C$ that are not regular.

The following extends the validity of \eqref{FLT Mayerhofer Wishart} to
its maximal domain $D_\Sigma$:

\begin{proposition}\label{FLT maximal}
The Laplace transform of $\Gamma(\beta,\Sigma;\Omega)$
can be extended to the domain $D_\Sigma$ and
\eqref{FLT Mayerhofer Wishart} holds for any $u\in D_\Sigma$.
\end{proposition}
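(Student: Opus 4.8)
The plan is to extend the Laplace transform from the cone $C$ to the larger domain $D_\Sigma = -\Sigma^{-1}/2 + C^\circ$ by the standard principle of analytic continuation of Laplace transforms: if $\mu = \Gamma(\beta,\Sigma;\Omega)$ is a positive measure on $C$ whose Laplace transform $\widehat\mu(u) := \int e^{-\langle u,\xi\rangle}\mu(d\xi)$ agrees with the explicitly given real-analytic function on $C$, and if the latter is finite on the open set $D_\Sigma \supset C$, then $\widehat\mu$ itself is finite and real-analytic on $D_\Sigma$ and coincides there with the explicit formula. The key input is that $D_\Sigma$ is an open convex set whose recession cone contains $C$ (indeed $D_\Sigma = -\Sigma^{-1}/2 + C^\circ$ has recession cone $\overline{C^\circ} = C$), so that any $u_0 \in D_\Sigma$ can be joined to a point of $C$ along directions that keep the exponential integrable.

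Concretely, I would proceed as follows. First, record that the measure $\mu$ exists by hypothesis and is supported on $C$, and that its Laplace transform is finite and equals the right-hand side of \eqref{FLT Mayerhofer Wishart} for every $u \in C$; this is the defining property. Second, fix an arbitrary $u_0 \in D_\Sigma$. Since $D_\Sigma$ is open, there is $\eps>0$ with $u_0 - \eps e' \in D_\Sigma$ for a suitable fixed element; more to the point, because $D_\Sigma = -\Sigma^{-1}/2 + C^\circ$, one has $u_0 + C \subset D_\Sigma$ and in fact $u_0 + v \in C$ for $v$ in a translate of $C$ deep enough into the cone — equivalently, $u_0$ lies in $C - w$ for some $w \in C^\circ$. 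Then for such $w$, $\langle u_0,\xi\rangle \ge \langle -w,\xi\rangle = -\langle w,\xi\rangle$ is bounded below on $C$ only after comparison with a genuinely integrable exponent; the correct comparison is $e^{-\langle u_0,\xi\rangle} = e^{-\langle u_0 + w,\xi\rangle} e^{\langle w,\xi\rangle}$ with $u_0 + w \in C$, which does not yet give integrability. The right move instead: for $u_0 \in D_\Sigma$ pick $u_1 \in C$ on the ray from $u_1$ through $u_0$ extended, write $u_0 = (1-\theta)u_1 + \theta u_2$... this is getting circular, so the clean approach is the one below.

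The cleanest route uses the one-dimensional reduction together with dominated convergence. Fix $u_0 \in D_\Sigma$ and note $u_0 + tC^\circ$ eventually meets $C$; concretely choose $v \in C^\circ$ with $u_0 + v \in C^\circ$ (possible since $D_\Sigma + C^\circ \subset$ something containing $C$, as $-\Sigma^{-1}/2 + C^\circ + C^\circ = -\Sigma^{-1}/2 + C^\circ$ and eventually $-\Sigma^{-1}/2 + $ large cone element lands in $C$). Consider the holomorphic function $z \mapsto F(z) := \int_C e^{-\langle u_0 + zv,\xi\rangle}\mu(d\xi)$ defined a priori for $\operatorname{Re}(z)$ large enough that $u_0 + \operatorname{Re}(z) v \in C$, where it equals the explicit formula; since $u_0 + zv \in D_\Sigma$ (a real-analyticity/holomorphy domain of the explicit formula) for all $z$ in a neighborhood of $[0,\infty)$ in $\mathbb C$, and since the explicit formula is bounded there, Fatou's lemma (applied as $\operatorname{Re}(z) \downarrow 0$ along reals) gives $\int_C e^{-\langle u_0,\xi\rangle}\mu(d\xi) \le \liminf F(\operatorname{Re} z) < \infty$. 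Once finiteness at $u_0$ is established, $\widehat\mu$ is real-analytic on $D_\Sigma$ (differentiation under the integral is justified locally by the finiteness just proved at a slightly interior point) and agrees with the explicit real-analytic formula on the open subset $C$, hence on all of the connected open set $D_\Sigma$ by the identity theorem.

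The main obstacle is purely the bookkeeping of the geometry of $D_\Sigma$: one must verify that for every $u_0 \in D_\Sigma$ there is a direction $v \in C^\circ$ such that $u_0 + tv \in C$ for all large $t$ and $u_0 + tv \in D_\Sigma$ for all $t \ge 0$, so that the monotone/Fatou passage is legitimate; this follows from $D_\Sigma = -\Sigma^{-1}/2 + C^\circ$ having recession cone $C$ and from $C$ being the closure of $C^\circ$, but it should be spelled out. A secondary technical point is justifying differentiation under the integral sign to get real-analyticity of $\widehat\mu$ on $D_\Sigma$; this is standard for Laplace transforms on convex domains of finiteness (the domain of finiteness of a Laplace transform is convex and the transform is analytic on its interior), and can simply be cited. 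No special properties of symmetric cones beyond the determinant identity $\det(e + P(\sqrt\Sigma)u) = \det(\Sigma)\det(\Sigma^{-1}+u)$ already recorded before the statement are needed.
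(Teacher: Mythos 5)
Your overall strategy is the right one and matches the paper's in outline (reduce to a one-dimensional Laplace transform along a ray, establish finiteness, then invoke the identity theorem for real-analytic functions on the connected open set $D_\Sigma$), but the crucial step --- finiteness of $\int e^{-\langle u_0,\xi\rangle}\mu(d\xi)$ for $u_0\in D_\Sigma\setminus C$ --- is not actually established by your argument. With $v\in C^\circ$ and $F(s)=\int_C e^{-\langle u_0+sv,\xi\rangle}\mu(d\xi)$, you only know $F(s)<\infty$ and $F(s)=g(s)$ (the explicit formula) for $s\geq s_*$, where $u_0+s_*v\in C$. Monotone convergence (or Fatou) gives $F(0)=\lim_{s\downarrow 0}F(s)$, but nothing prevents $F(s)=+\infty$ for all $0<s<s_*$; your clause ``since the explicit formula is bounded there'' implicitly assumes $F(s)=g(s)$ for small $s$, which is exactly the identity you are trying to prove --- the argument is circular at this point. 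Likewise, the fact that the domain of finiteness of a Laplace transform is convex with the transform analytic on its interior does not enlarge that domain. What is genuinely needed is a Landau/Pringsheim-type theorem for Laplace transforms of \emph{positive} measures on $\mathbb{R}_+$: the transform must have a singularity at its abscissa of convergence, so if it agrees on $(-\infty,s_0)$ with a function analytic on the larger interval $(-\infty,s_1)$, then it is finite and equal to that function there. This is precisely the paper's Lemma \ref{lem adpta} (quoted from \cite[Lemma B.2]{gmm}), and it is the one ingredient your proposal is missing.

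For comparison, the paper also organizes the reduction more economically: it performs the one-dimensional continuation only once, for the pushforward of $\mu$ under $\xi\mapsto\tr(\Sigma^{-1}\xi)$, obtaining \eqref{FLT Mayerhofer Wishart} at $u=-t\Sigma^{-1}$ for all $t<1/2$; then for an arbitrary $u\in D_\Sigma$ it picks $t<1/2$ with $u+t\Sigma^{-1}\in C$ and uses self-duality of the cone to get the pointwise domination $e^{-\tr(u\xi)}\leq e^{t\tr(\Sigma^{-1}\xi)}$ for $\xi\in C$, which yields finiteness at every $u\in D_\Sigma$ simultaneously. If you repair your ray argument by citing Lemma \ref{lem adpta} on each ray $u_0+sv$, it works too, but you then owe the bookkeeping (which you flag yourself) that each $u_0\in D_\Sigma$ admits such a $v$; the single-direction domination argument avoids this entirely.
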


For the proof of this statement we require the following concerning the analytical extension of the Laplace transform of a measure on the non-negative real line (\cite[Lemma B.2]{gmm}):
\begin{lemma}\label{lem adpta}
Let $\mu$ be a probability measure on $\mathbb R_+$, and $h$ an analytic function on $(-\infty,s_1)$, where $s_1>s_0\geq 0$ such that
\begin{equation}\label{eq A1}
\int _{\mathbb R_+}e^{sx} \mu(dx)=h(s)
\end{equation}
for $s\in (-\infty, s_0)$. Then \eqref{eq A1} also holds for $s\in (-\infty, s_1)$. 
\end{lemma}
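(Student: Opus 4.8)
The plan is to analyse the abscissa of convergence of the Laplace transform of $\mu$. Put
\[
s^\ast:=\sup\Bigl\{s\in\mathbb R:\ \int_{\mathbb R_+}e^{sx}\,\mu(dx)<\infty\Bigr\},\qquad L(s):=\int_{\mathbb R_+}e^{sx}\,\mu(dx)\quad(s<s^\ast).
\]
Since $\mu$ is a probability measure, $L(s)\le 1$ for $s\le 0$, so $s^\ast\ge 0$; and since $L(s)=h(s)$ is finite for $s<s_0$, even $s^\ast\ge s_0$. Everything reduces to showing $s^\ast\ge s_1$: granting this, $\int_{\mathbb R_+}e^{sx}\,\mu(dx)<\infty$ for all $s<s_1$, and the extension of \eqref{eq A1} to $(-\infty,s_1)$ will follow from the identity theorem.

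I would first record two preliminary facts. \emph{(a) $L$ is real-analytic on $(-\infty,s^\ast)$ with $L^{(n)}(s)=\int_{\mathbb R_+}x^n e^{sx}\,\mu(dx)$.} Fix $s_2<s^\ast$ and $\delta>0$ with $s_2+\delta<s^\ast$. The elementary bound $x^n\le n!\,\delta^{-n}e^{\delta x}$ (for $x\ge 0$) gives $\int x^n e^{s_2 x}\,\mu(dx)\le n!\,\delta^{-n}L(s_2+\delta)<\infty$, so the expansion $e^{sx}=e^{s_2x}\sum_{n\ge 0}(s-s_2)^n x^n/n!$ may be integrated term by term for $|s-s_2|<\delta$, the interchange being justified by $\sum_n\frac{|s-s_2|^n}{n!}\int x^n e^{s_2x}\,\mu(dx)\le L(s_2+\delta)\sum_n(|s-s_2|/\delta)^n<\infty$. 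This gives a power-series representation of $L$ near each $s_2<s^\ast$ and identifies its derivatives. \emph{(b) $L\equiv h$ on $(-\infty,\min(s^\ast,s_1))$.} Both functions are real-analytic on this connected interval ($L$ by (a), $h$ by hypothesis) and agree on the open subinterval $(-\infty,s_0)$, so they coincide throughout by the identity theorem.

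Next I would argue by contradiction, assuming $s^\ast<s_1$. If $\int_{\mathbb R_+}e^{s^\ast x}\,\mu(dx)=\infty$, monotone convergence forces $L(s)\to\infty$ as $s\uparrow s^\ast$, contradicting $L(s)=h(s)\to h(s^\ast)<\infty$ (using (b) and continuity of $h$ at $s^\ast<s_1$). If instead $\int_{\mathbb R_+}e^{s^\ast x}\,\mu(dx)<\infty$, then letting $s\uparrow s^\ast$ in the identity $h^{(n)}(s)=L^{(n)}(s)=\int x^n e^{sx}\,\mu(dx)$ (valid on $(-\infty,s^\ast)$ by (a) and (b)), monotone convergence (the integrands increase to $x^n e^{s^\ast x}\ge 0$) and the continuity of $h^{(n)}$ at $s^\ast$ give
\[
\int_{\mathbb R_+}x^n e^{s^\ast x}\,\mu(dx)=h^{(n)}(s^\ast)\ge 0\qquad(n\ge 0),
\]
each a finite number. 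Let $\rho>0$ be the radius of convergence of the Taylor series of $h$ at $s^\ast$ and take $t\in(0,\rho)$. Tonelli's theorem then gives
\[
\int_{\mathbb R_+}e^{(s^\ast+t)x}\,\mu(dx)=\sum_{n\ge 0}\frac{t^n}{n!}\int_{\mathbb R_+}x^n e^{s^\ast x}\,\mu(dx)=\sum_{n\ge 0}\frac{h^{(n)}(s^\ast)}{n!}\,t^n<\infty,
\]
the series converging since $t<\rho$ and its terms are nonnegative. Hence $s^\ast+t$ lies in the convergence region, contradicting the maximality of $s^\ast$. Therefore $s^\ast\ge s_1$, which, as noted, proves the lemma.

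I expect the second case above to be the only genuinely delicate point: a priori the Laplace integral may stay finite at $s^\ast$, and excluding this is precisely a Pringsheim-type fact — a power series with nonnegative coefficients is singular at its radius of convergence on the positive real axis — made explicit here by the observation that the Taylor coefficients of $h$ at $s^\ast$ are the moments $\int x^n e^{s^\ast x}\,\mu(dx)\ge 0$. The other ingredients (monotone convergence, Tonelli, and the identity theorem for real-analytic functions) are routine.
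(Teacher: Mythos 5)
Your proof is correct. There is, however, nothing in the paper to compare it against: the paper does not prove this lemma but imports it verbatim as \cite[Lemma B.2]{gmm}. Your argument is the classical Landau/Pringsheim theorem that, for the Laplace transform of a nonnegative measure, the real endpoint $s^\ast$ of the convergence interval is a singular point; you have made it self-contained and each step is properly justified --- the bound $x^n\le n!\,\delta^{-n}e^{\delta x}$ giving real-analyticity of $L$ and the formula $L^{(n)}(s)=\int x^n e^{sx}\,\mu(dx)$ strictly inside the convergence region, the identity theorem to transfer this to $h$, the monotone-convergence passage $s\uparrow s^\ast$ splitting into the divergent and convergent cases, and the Tonelli re-expansion at $s^\ast$ using the nonnegativity of the Taylor coefficients $h^{(n)}(s^\ast)=\int x^ne^{s^\ast x}\,\mu(dx)$ to push the convergence abscissa past $s^\ast$. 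The only remark worth making is that the conclusion you prove is slightly stronger than what is stated (you show the Laplace integral actually converges on all of $(-\infty,s_1)$, not merely that the identity extends), which is exactly what the application in Proposition \ref{FLT maximal} needs.
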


\begin{proof}[Proof of Proposition \ref{FLT maximal}]

Let $\mu^*$ be the pushforward of $\mu=\Gamma(\beta,\Sigma;\Omega)$ under
$\xi\mapsto \tr(\Sigma^{-1}\xi)=\tr((P(\sqrt{\Sigma^{-1}} e)\xi)$. Then $\mu^*$ is a probability measure on $\mathbb R_+$ with Laplace transform
\begin{align}\label{eq: one dim}
f(t):&=\int e^{t x}\mu^*(dx)=\int e^{-\tr(-t (P(\sqrt{\Sigma^{-1}} e)  ,\xi)}\mu(d\xi)\\\nonumber
&=(1-2t)^{-\beta}e^{t(1-2t)^{-1}\tr(\Sigma^{-1}\Omega)}
\end{align}
and the right side is real analytic for $t<1/2$. Hence, by Lemma \ref{lem adpta} the left side is also
finite for $t<1/2$ and equality holds in \eqref{eq: one dim}.

Therefore, we have shown that the formula \eqref{FLT Mayerhofer Wishart} can be extended to $u=-t\Sigma^{-1}$, for any
$t<1/2$. Since $u>-\Sigma^{-1}/2$ implies $u>-t\Sigma^{-1}$ for some $t<1/2$, we have for any $u>-{\Sigma^{-1}}/2$
\[
\int e^{-\tr(u\xi)}\mu(d\xi)\leq \int e^{t \tr(\Sigma^{-1}\xi)}\mu(d\xi)=f(t)<\infty
\]
and therefore the left side of \eqref{FLT Mayerhofer Wishart} exists for any $u>-\Sigma^{-1}/2$, as does the right side. The two real analytic functions (in several variables) agree on the connected open set $D_\Sigma$, since they do agree on the open subset $C^\circ$ (\cite[(9.4.2)]{dieudonne2013foundations}).
\end{proof}
\subsection{Transformations and the Natural Exponential Family}
\begin{lemma}\label{mayx}
Let $\beta\geq 0,\Omega\in C$ and $\Sigma\in C^{\circ}$.
\begin{enumerate}
\item \label{x1} If $X\sim\Gamma(\beta,e;\Omega)$, then $Y= P(\sqrt{\Sigma}) X\sim \Gamma(\beta,\Sigma;P(\sqrt{\Sigma})\Omega)$.
Conversely, $Y\sim\Gamma (\beta,\Sigma;P(\sqrt{\Sigma})\Omega)$ implies $X=P(\sqrt{\Sigma}^{-1})Y\sim\Gamma(\beta,e;\Omega)$.
\item \label{x2} If $X\sim\mu(d\xi)\sim\Gamma(\beta,te;\Omega)$, then
for $v:=\frac{1}{2}\left(-\frac{e}{t}+e\right)=\frac{1}{2}(1-\frac{1}{t})e$ there exists a random variable $Y$ distributed as
\[
\nu(d\xi):= \frac{\exp(\tr(-v\xi))\mu(d\xi)}{\mathbb E[\exp(\tr(-vX))]} \sim \Gamma (\beta,\frac{\Omega}{t^2};e).
\]\
Conversely,  $\nu(d\xi)\sim\Gamma(\beta,e;\frac{\Omega}{t^2})$ implies
that $\frac{e^{\tr(v\xi)}\nu(d\xi)}{\int e^{\tr(v\xi)}\nu(d\xi)}\sim\Gamma(\beta,te;\Omega)$.
\end{enumerate}
\end{lemma}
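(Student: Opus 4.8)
The plan for both parts is to identify the distribution in question through its Laplace transform, using that a finite measure on $C$ is determined by the values of its Laplace transform on $C^\circ$. For part \ref{x1}, since $\xi\mapsto P(\sqrt\Sigma)\xi$ is a bijection of $C$ onto itself, it suffices to show that the pushforward of $\Gamma(\beta,e;\Omega)$ under this map is $\Gamma(\beta,\Sigma;P(\sqrt\Sigma)\Omega)$; both implications then follow at the level of measures. If $X\sim\Gamma(\beta,e;\Omega)$, then by the self-adjointness of $P(\sqrt\Sigma)$ (Lemma \ref{lem: properties} \ref{prop 3a}) one has $\tr\bigl(u\,P(\sqrt\Sigma)X\bigr)=\tr\bigl(P(\sqrt\Sigma)u\cdot X\bigr)$, so the Laplace transform of $P(\sqrt\Sigma)X$ at $u$ equals the Laplace transform of $X$ at $P(\sqrt\Sigma)u$, namely $\Delta\bigl(e+2P(\sqrt\Sigma)u\bigr)^{-\beta}e^{-\langle((P(\sqrt\Sigma)u)^{-1}+2e)^{-1},\,\Omega\rangle}$. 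The determinant factor already has the form prescribed by \eqref{FLT Mayerhofer Wishart}, so the remaining claim reduces to the Jordan-algebra identity
\[
\bigl((P(\sqrt\Sigma)u)^{-1}+2e\bigr)^{-1}=P(\sqrt\Sigma)\,(u^{-1}+2\Sigma)^{-1},
\]
which, combined once more with Lemma \ref{lem: properties} \ref{prop 3a}, rewrites the exponent as $-\langle(u^{-1}+2\Sigma)^{-1},P(\sqrt\Sigma)\Omega\rangle$. This identity follows from Lemma \ref{lem: properties} \ref{prop 1a} applied twice, together with $P(\sqrt\Sigma)e=\Sigma$ and $P(\sqrt\Sigma)^{-1}=P(\sqrt{\Sigma^{-1}})$: one writes $(P(\sqrt\Sigma)u)^{-1}+2e=P(\sqrt{\Sigma^{-1}})u^{-1}+P(\sqrt{\Sigma^{-1}})(2\Sigma)=P(\sqrt{\Sigma^{-1}})(u^{-1}+2\Sigma)$ and inverts. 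All steps are valid for $u\in C^\circ$, which suffices.

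For part \ref{x2}, $Y$ arises from an exponential (Esscher) tilt of $\mu=\Gamma(\beta,te;\Omega)$. By Proposition \ref{FLT maximal} the Laplace transform of $\mu$ is finite on the maximal domain $D_{te}=-\tfrac1{2t}e+C^\circ$, where, using $P(\sqrt{te})=t\,\mathrm{id}$, it equals $\Delta(e+2tu)^{-\beta}e^{-\langle(u^{-1}+2te)^{-1},\,\Omega\rangle}$. Since $v+\tfrac1{2t}e=\tfrac12 e\in C^\circ$, the point $v$ lies in $D_{te}$, so the normalizing constant $Z=\mathbb E[e^{-\tr(vX)}]$ is finite and $\nu$ is a probability measure; moreover $u+v\in D_{te}$ for every $u\in C^\circ$, so for such $u$ the Laplace transform of $\nu$ is the ratio of the Laplace transform of $\mu$ at $u+v$ and at $v$. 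The heart of the argument is the simplification of this ratio. Using $2tv=(t-1)e$ one gets $e+2t(u+v)=t(e+2u)$ and $e+2tv=te$, so by homogeneity of $\Delta$ the ratio of determinant factors is $\Delta(e+2u)^{-\beta}$. For the exponential part, writing $w:=u+v=\tfrac12(e+2u)-\tfrac1{2t}e$ and working inside the commutative associative subalgebra generated by $e+2u$, one has $(w^{-1}+2te)^{-1}=w\,(e+2tw)^{-1}=t^{-1}w\,(e+2u)^{-1}$, from which a short computation gives
\[
(w^{-1}+2te)^{-1}-(v^{-1}+2te)^{-1}=\tfrac{1}{t^{2}}\,(u^{-1}+2e)^{-1}.
\]
Hence the Laplace transform of $\nu$ equals $\Delta(e+2u)^{-\beta}e^{-\langle(u^{-1}+2e)^{-1},\,\Omega/t^{2}\rangle}$, i.e.\ the Laplace transform of the non-central Wishart distribution with scale $e$ and non-centrality parameter $\Omega/t^{2}$ (cf.\ \eqref{FLT Mayerhofer Wishart}); uniqueness of Laplace transforms completes this direction.

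The converse in \ref{x2} requires nothing new: the maps $\mu\mapsto e^{-\tr(v\,\cdot)}\mu/Z$ and $\nu\mapsto e^{\tr(v\,\cdot)}\nu/Z'$ are mutually inverse bijections between probability measures whenever both normalizing constants are finite, and $-v+\tfrac12 e=\tfrac1{2t}e\in C^\circ$ shows that $-v$ lies in the maximal domain $D_e$ of the tilted law, so $Z'<\infty$; the claim then follows by inverting the forward map. I expect the main obstacle to be the exponent bookkeeping in part \ref{x2}: the cancellation of the powers of $t$ and the collapse of the two inverse terms hinge on the exact value of $v$, and one must keep careful track of which elements are scalar multiples of $e$ (namely $v$, $v^{-1}$, $e+2tv$) and which are not ($w=u+v$), manipulating the latter only inside the commutative algebra it generates with $e+2u$. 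By contrast, part \ref{x1} is routine once the identity $\bigl((P(\sqrt\Sigma)u)^{-1}+2e\bigr)^{-1}=P(\sqrt\Sigma)(u^{-1}+2\Sigma)^{-1}$ has been isolated.
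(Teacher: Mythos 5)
Your proof is correct and follows essentially the same route as the paper: part \ref{x1} by transporting the Laplace transform through $P(\sqrt{\Sigma})$ using Lemma \ref{lem: properties} \ref{prop 1a} and \ref{prop 3a}, and part \ref{x2} by an exponential tilt justified via Proposition \ref{FLT maximal}, with the same algebraic simplifications $e+2t(u+v)=t(e+2u)$ and the collapse of the inverse terms producing the factor $1/t^{2}$. Your bookkeeping in \ref{x2} in fact matches the paper's intended computation (whose final displayed line omits the $1/t^{2}$ that is restored in the constant $c(\Omega,t)$ and in the conclusion), so nothing is missing.
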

\begin{proof}
Proof of \ref{x1}: Using repeatedly Lemma \ref{lem: properties} \ref{prop 1a} and \ref{prop 2a}, we get
\begin{align*}
\mathbb E[e^{-\tr(u Y)}]&=\mathbb E[e^{-tr(u P(\sqrt{\Sigma}) X)}]=\mathbb E[e^{-\tr((P(\sqrt{\Sigma}) u)X)}]\\
&=(\det(e+2P(\sqrt{\Sigma}) u))^{-\beta}e^{-\tr( (u^{-1}+P(\sqrt{\Sigma}e)^{-1} P(\sqrt{\Sigma})\Omega) },
\end{align*}
and since $P(\sqrt{\Sigma}e)=\Sigma$, we have indeed $Y\sim \Gamma(\beta,\Sigma;P(\Sigma)\Omega)$. The second part can be proved
quite similarly.

Proof of \ref{x2}:
 Note that due to Proposition \ref{FLT maximal}, $v\in D_\Sigma$, where $\Sigma=t e$ and \eqref{FLT Mayerhofer Wishart} holds for $v$. Hence a random variable $Y$
with distribution $\frac{\exp(\tr(-v\xi))\mu(d\xi)}{\mathbb E[\exp(\tr(-vX))]}$ exists. We compute its Laplace transform: First,
\[
e+2 P(\sqrt{\Sigma}(u+v)=e+2 P\sqrt{\Sigma} u-P(\sqrt{\Sigma})\Sigma^{-1}+P\sqrt{\Sigma}e=P(\sqrt{\Sigma})(2u+e),
\]
because $P(\sqrt{\Sigma})\Sigma^{-1}=2\sqrt{\Sigma}(\sqrt{\Sigma}\Sigma^{-1})-(\sqrt{\Sigma})^2 \Sigma^{-1}=2e-e=e$. Hence, by
Lemma \ref{lem: properties} \ref{prop 3a}, we have for $\Sigma=t e$,
\begin{equation}\label{det compu}
\det(e+2 P(\sqrt{\Sigma}(u+v))=t\det(2u+e).
\end{equation}
Second, we have
\begin{align*}
&\langle ((u+v)^{-1}+2te)^{-1},\Omega\rangle=\langle((u+\frac{1}{2} e(1-\frac{1}{t}))^{-1}+2te)^{-1},\Omega\rangle\\
&\quad=\langle (u+\frac{1}{2} e(1-\frac{1}{t})(e+2t(u+\frac{1}{2} e(1-\frac{1}{t})))^{-1},\Omega\rangle\\&=\langle (2u+e-\frac{e}{t})(e+2u)^{-1},\frac{\Omega}{2t}\rangle\\
&=\langle ((2u+e)-\frac{1}{t}(2u+e)+\frac{2u}{t})(e+2u)^{-1},\frac{\Omega}{2t}\rangle\\&=
\frac{t-1}{2t^2}\tr(\Omega)+\langle (u^{-1}+2e)^{-1},\Omega\rangle.
\end{align*}
The combination of the last computations for the trace, in combination with \eqref{det compu}, yields
\begin{equation}\label{eq: final super}
\int e^{-\langle u+v,\xi}\mu(d\xi)=c(\Omega,t) \left(\det(u+2e)^{-\beta} e^{-\langle (u^{-1}+2e)^{-1},\Omega\rangle}\right),
\end{equation}
with the constant $c(\Omega,t)=t^{-\beta}\exp(-\frac{t-1}{2t^2}\tr(\Omega))$. Since $\nu(d\xi):=\frac{\exp(\tr(-v\xi))\mu(d\xi)}{\mathbb E[\exp(\tr(-vX))]}$ is a probability measure, by construction,
and since the term within the brackets on the right side of \eqref{eq: final super} is the Laplace transform of $\Gamma(\beta,e;\frac{\Omega}{t^2})$, we conclude that
$c(\Omega,t)=\mathbb E[\exp(\tr(-vX))]$, and thus $\nu(d\xi)\sim \Gamma(\beta,e;\frac{\Omega}{t^2})$, as claimed. The converse statement can be proved
similarly.

\end{proof}

\bibliographystyle{amsplain}

\end{document}